\newcommand{\Addresses}{{
  \bigskip
  \footnotesize  
  
  \noindent Gabriele Viaggi, \textsc{Mathematical Institute, Sapienza University of Rome, Rome}\par\nopagebreak
  \textit{E-mail address}: \texttt{gabriele.viaggi@uniroma1.it}
  }}
\newcommand{\mb}[1]{
\mathbb{#1}
}
\newcommand{\T}{
\mathcal{T}
}
\newtheorem*{thm*}{Theorem}
\newtheorem*{cor*}{Corollary}
\newtheorem*{pro*}{Proposition}
\newtheorem{thmA}{Theorem}
\newtheorem{thm}{Theorem}[section]
\newtheorem{cor}[thm]{Corollary}
\newtheorem{lem}[thm]{Lemma}
\newtheorem{pro}[thm]{Proposition}
\theoremstyle{definition}
\newtheorem{claim}[thm]{Claim}
\newtheorem*{defi*}{Definition}
\newtheorem{dfn}[thm]{Definition}
\newtheorem{remark}[thm]{Remark}
\title[Geometric components via robust families]{Geometric components of representation spaces via robust families of submanifolds}
\author{Gabriele Viaggi}
\begin{document}

\begin{abstract}
We introduce robust families of submanifolds for a linear Lie group $G$. We show that they give rise to geometric subspaces of the representation space ${\rm Hom}(\Gamma,G)$. As an application, we give a unified short proof of results of Beyrer and Kassel and of Benoist and Koszul about the existence of higher Teichmüller components for $G={\rm SO}(p,q+1),{\rm SL}(p+1,\mb{R})$. Being based on very general principles, our approach might be suited for finding geometric components in various ${\rm Hom}(\Gamma,G)$.
\end{abstract}

\maketitle

\section{Introduction}

In this article, we develop a novel geometric approach to geometric components of representation spaces ${\rm Hom}(\Gamma,G)$ (where $\Gamma$ is a finitely generated group and $G$ is a Lie group). These are connected components of ${\rm Hom}(\Gamma,G)$ consisting entirely of discrete and faithful representations. When $\Gamma$ is the fundamental group of a closed orientable surface of genus at least 2, the search and classification of such components generated a rich and active area of research going under the name of higher Teichmüller theory with plenty of interesting examples (see \cite{BIW,La,FG,BGLPW}). Much less is known when $\Gamma$ is the fundamental group of a closed aspherical manifold of dimension at least 3 (see \cite{Be3,K,Ba,BK}). A crucial challenge comes from the fact that most techniques used in the surface case rely specifically on either surface topology or the non-abelian Hodge correspondence, and both break down in higher dimensions. Developing a framework to produce and test examples in such generalities is one of the central motivations for this work. The main application is a new unified short proof of the following results for $G={\rm SO}(p,q+1)$ and $G={\rm SL}(p+1,\mb{R})$ originally due to Beyrer and Kassel \cite{BK} (the first) and Benoist \cite{Be3} and Koszul \cite{K} (the second). 

\begin{thmA}
\label{thm:main2}
Fix $p\ge 2,q\ge 1$. Let $\Gamma$ be a torsion-free group of cohomological dimension $p$ without infinite nilpotent normal subgroups. The subspace 
\[
{\rm Max}(\Gamma,{\rm SO}(p,q+1))\subset{\rm Hom}(\Gamma,{\rm SO}(p,q+1))
\]
of discrete and faithful representations that preserve some complete {\rm maximal $p$-submanifold} $M\subset\mb{H}^{p,q}$ is open and closed.   
\end{thmA}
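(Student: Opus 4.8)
The plan is to recognize the complete maximal spacelike $p$-submanifolds of $\mb{H}^{p,q}$ as a robust family for $G={\rm SO}(p,q+1)$, and then to feed this into the general principle that a robust family carves out a geometric (open and closed) subspace of ${\rm Hom}(\Gamma,G)$. In this way the whole argument reduces to two tasks: verifying the robustness axioms for this particular family, and matching the structural hypotheses on $\Gamma$ to the group-theoretic input of the general theorem.

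First I would assemble the family $\mathcal{F}$ of all complete maximal spacelike $p$-submanifolds $M\subset\mb{H}^{p,q}$. Its invariance under ${\rm SO}(p,q+1)$ is immediate, since both spacelikeness and the vanishing of the mean curvature are isometry-invariant conditions. The substantive content is the geometric analysis that makes $\mathcal{F}$ robust: a uniform spacelike bound together with $C^1$ and curvature estimates (so that tangent $p$-planes stay uniformly away from the light cone and the induced metrics are controlled), a compactness statement (a limit of members of $\mathcal{F}$ is again a complete maximal spacelike $p$-submanifold, not a degenerate lightlike object), and a uniqueness/continuity statement solving the asymptotic Plateau problem: each $M\in\mathcal{F}$ is determined by its ideal boundary $\partial_\infty M\subset\partial\mb{H}^{p,q}$, and the correspondence $\partial_\infty M\leftrightarrow M$ is continuous. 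I would also record that, by the Gauss equation and maximality, every $M\in\mathcal{F}$ is a Hadamard manifold of dimension $p$, hence diffeomorphic to $\mb{R}^p$; this is what couples the family to the hypothesis ${\rm cd}(\Gamma)=p$.

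Granting robustness, the two halves come out as follows. For openness, if $\rho$ preserves some $M\in\mathcal{F}$ then $\rho(\Gamma)$ acts freely and properly discontinuously by Riemannian isometries on the contractible $p$-manifold $M$; since ${\rm cd}(\Gamma)=p$ the quotient $M/\rho(\Gamma)$ is a closed aspherical $p$-manifold, so $\rho$ is discrete, faithful, and acts cocompactly. Cocompactness plus the uniqueness/continuity axiom then gives a deformation (Ehresmann--Thurston-type) principle: a representation $\rho'$ near $\rho$ preserves nearby boundary data at infinity, and robustness produces a unique $M'\in\mathcal{F}$ close to $M$ preserved by $\rho'$ with the same cocompact picture, so $\rho'\in{\rm Max}(\Gamma,{\rm SO}(p,q+1))$. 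For closedness, given $\rho_n\in{\rm Max}$ with $\rho_n\to\rho$ and invariant $M_n\in\mathcal{F}$, I would use the compactness axiom to extract a limiting maximal $p$-submanifold $M$ preserved by $\rho$. The danger is collapse of the cocompact quotients $M_n/\rho_n(\Gamma)$ (injectivity radius tending to $0$); here the hypothesis that $\Gamma$ has no infinite nilpotent normal subgroup intervenes through the Margulis lemma to forbid such collapse, yielding a uniform injectivity-radius bound, a non-degenerate limit $M$, and a cocompact action of $\rho(\Gamma)$ on $M$, whence $\rho\in{\rm Max}$ by the same $K(\Gamma,1)$ argument.

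The step I expect to be the real obstacle is exactly the robustness input for maximal spacelike submanifolds: the uniform spacelike and curvature a priori estimates guaranteeing that limits stay in $\mathcal{F}$, together with existence and uniqueness for the asymptotic Plateau problem with prescribed boundary at infinity. This is genuine pseudo-Riemannian geometric analysis, and it is precisely the content that the robust-families formalism is designed to isolate, so that once these estimates are in hand the open-and-closed conclusion of Theorem~\ref{thm:main2} follows formally.
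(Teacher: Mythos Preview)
Your openness outline matches the paper's: both invoke an Ehresmann--Thurston deformation principle together with existence of solutions to the asymptotic Plateau problem in $\mb{H}^{p,q}$ (due to Seppi--Smith--Toulisse).

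The closedness argument has a genuine gap. The compactness axiom in a robust family is compactness \emph{modulo the $G$-action}: given $(M_n,o_n)$ one only finds $g_n\in G$ with $g_n(M_n,o_n)\to(M,o)$. Plugging this into Theorem~\ref{thm:main4} yields that $\bar\rho_n=g_n\rho_ng_n^{-1}$ subconverges to some $\bar\rho$ preserving $M$ --- closedness in the \emph{character variety}, not in ${\rm Hom}(\Gamma,G)$. You assert that compactness lets you ``extract a limiting maximal $p$-submanifold $M$ preserved by $\rho$'', but without renormalizing, the Hausdorff limit $M_\infty$ of the $M_n$ (which does exist, as a $1$-Lipschitz graph in any Poincar\'e model) need not be spacelike, let alone maximal; it may contain lightlike rays. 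Your ``no collapse via Margulis'' addresses the wrong obstruction: the quotients $M_n/\rho_n(\Gamma)$ can have uniformly bounded geometry while the embedded $M_n\subset\mb{H}^{p,q}$ still degenerate in this way.

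The paper's argument keeps both limits in play. The no-nilpotent-normal-subgroup hypothesis (via Kazhdan--Margulis--Zassenhaus) makes $\bar\rho$ discrete and faithful, hence, by ${\rm cd}(\Gamma)=p$, cocompact on $M$; Milnor--\v{S}varc on $M$ is then used to show that the unrenormalized limit $M_\infty$ contains no lightlike ray. A separate direct computation shows that if $g_n$ were unbounded, $M_\infty$ \emph{would} contain one. The resulting contradiction forces $g_n$ to be bounded, whence $\rho$ is conjugate to $\bar\rho$ and itself preserves a translate of $M$. This boundedness-of-$g_n$ step is the core of closedness in ${\rm Hom}$ and is precisely what your sketch skips; the robust-family formalism by itself does not supply it.
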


\begin{thmA}
\label{thm:main3}
Fix $p\ge 2$. Let $\Gamma$ be a torsion-free group of cohomological dimension $p$ without infinite nilpotent normal subgroups. The subspace 
\[
{\rm AffSph}(\Gamma,{\rm SL}(p+1,\mb{R}))\subset{\rm Hom}(\Gamma,{\rm SL}(p+1,\mb{R}))
\]
of discrete and faithful representations that preserve some complete {\rm hyperbolic affine $p$-sphere} $M\subset\mb{R}^{p+1}$ is open and closed. 
\end{thmA}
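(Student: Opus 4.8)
The strategy is to recognize the family of complete hyperbolic affine $p$-spheres as a robust family for $G=\mathrm{SL}(p+1,\mb{R})$ and then to invoke the general mechanism, so that both openness and closedness follow from the abstract theorem on robust families. The first step is to make the geometry explicit through the Cheng--Yau correspondence: every properly convex cone $\Omega\subset\mb{R}^{p+1}$ carries a unique complete hyperbolic affine $p$-sphere, centered at the origin, asymptotic to $\partial\Omega$, with affine mean curvature $-1$; being canonically attached to $\Omega$, this sphere is $\mathrm{SL}(p+1,\mb{R})$-equivariant. Thus a representation $\rho$ preserves a complete hyperbolic affine $p$-sphere $M$ if and only if it preserves the properly convex cone over which $M$ is a radial graph, and $\rho(\Gamma)$ then acts by isometries of the complete Riemannian (Blaschke) metric on $M\cong\mb{R}^p$. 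Since $\Gamma$ is torsion-free of cohomological dimension $p$, discreteness and faithfulness together with compactness of point-stabilizers in $\mathrm{Aut}(\Omega)$ force the action on the contractible $p$-manifold $M$ to be free and properly discontinuous, so that $M/\rho(\Gamma)$ is an aspherical $K(\Gamma,1)$ of dimension $p$; as an open $p$-manifold would have cohomological dimension at most $p-1$, the action must in fact be cocompact. This identifies $\mathrm{AffSph}(\Gamma,\mathrm{SL}(p+1,\mb{R}))$ with the set of holonomies of closed convex projective $p$-manifolds and reduces the statement to verifying the robustness axioms for the family of affine spheres.

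For openness I would verify the stability axiom. Given $\rho_0$ preserving $M_0$, equivalently dividing a properly convex cone $\Omega_0$, the openness of the condition of dividing a properly convex domain (structural stability of convex projective structures, \cite{K}) provides, for every $\rho$ near $\rho_0$, a nearby $\rho$-invariant properly convex cone $\Omega$. Its Cheng--Yau sphere is a complete hyperbolic affine $p$-sphere preserved by $\rho$, and the uniform bounded geometry of the Blaschke metric — furnished by the a priori estimates for the governing Monge--Amp\`ere (affine sphere) equation of Cheng--Yau and Calabi — guarantees that the isometric $\rho(\Gamma)$-action stays proper, discrete, and faithful. This is precisely the content of robustness under perturbation.

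The main obstacle is closedness, which is the compactness axiom. Let $\rho_n\to\rho$ with each $\rho_n$ dividing a properly convex cone $\Omega_n$ and preserving the associated affine sphere $M_n$. After normalizing by $G$ (for instance by fixing a basepoint in the interior) I would extract a limit of the convex sets and must show that it does \emph{not} degenerate: the limit cone must retain nonempty interior and contain no affine line, i.e.\ remain properly convex, rather than collapsing onto a cone over a lower-dimensional convex body, a half-space, or a reducible product. This is exactly where the standing hypotheses enter: cohomological dimension $p$ pins down the dimension and the cocompactness of the limiting action as above, while the absence of infinite nilpotent normal subgroups rules out the invariant unipotent subgroups and invariant proper subspaces that necessarily accompany an improperly convex or reducible limit (\cite{Be3}). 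Once proper convexity of the limit cone $\Omega$ is established, Cheng--Yau produces its canonical affine sphere $M$, which is $\rho$-invariant by equivariance, and the uniform Blaschke-geometry estimates transport discreteness and faithfulness to the limit, placing $\rho$ in $\mathrm{AffSph}(\Gamma,\mathrm{SL}(p+1,\mb{R}))$. The argument runs in close parallel to Theorem~\ref{thm:main2}, with the complete maximal spacelike submanifold and its induced metric in $\mb{H}^{p,q}$ replaced by the affine sphere and its Blaschke metric, the shared engine being the robust-family criterion applied to a PDE-defined canonical invariant submanifold.
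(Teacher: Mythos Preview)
Your outline has the right ingredients but conflates two distinct steps in the closedness argument, and the gap is exactly where the work lies.

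The robust-family theorem (Theorem~\ref{thm:main4}) only yields closedness in the \emph{character variety}: it produces elements $g_n\in G$ such that, up to subsequence, $\bar\rho_n:=g_n\rho_ng_n^{-1}$ converges to some $\bar\rho$ preserving an $M\in\mathcal{M}$. It says nothing about $\rho=\lim\rho_n$ itself. To place $\rho$ in $\mathrm{AffSph}(\Gamma,G)$ you must show that the $g_n$ stay bounded, so that $\bar\rho$ is conjugate to $\rho$. Your sketch does not isolate this step. You write ``after normalizing by $G$\ldots\ show the limit does not degenerate'', but after normalizing (Benz\'ecri compactness, which is precisely Property~\eqref{lab:compactness} here) the limit cone is \emph{automatically} properly convex; the question is whether $\rho$, and not merely its conjugate $\bar\rho$, preserves it. Arguing directly that the un-normalized $\Omega_n$ cannot degenerate is a different route (essentially Benoist's original one), and your paragraph does not supply that argument either.

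The paper's missing step is short: if $g_n$ is unbounded, pass to $a_ng_n\to\phi\in\partial G$; from $(a_ng_n)\rho_n(\gamma)=\bar\rho_n(\gamma)(a_ng_n)$ one gets $\phi\rho(\gamma)=\bar\rho(\gamma)\phi$, so $\bar\rho(\Gamma)$ preserves the proper subspace $\mathrm{Im}(\phi)\subsetneq\mb{R}^{p+1}$. This contradicts irreducibility of a group dividing a properly convex domain with no infinite nilpotent normal subgroup (Benoist, \cite{Be2}). Your phrase ``rules out\ldots\ invariant proper subspaces'' points toward this, but you apply it to the wrong object: it is $\bar\rho$ (which you \emph{know} divides a convex set) whose irreducibility is needed, and the mechanism linking unboundedness of $g_n$ to an invariant subspace for $\bar\rho$ is the idea you are missing.

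A smaller point: there is no ``stability axiom'' in Definition~\ref{dfn:robust family}. Openness is handled outside the robust-family formalism, via Ehresmann--Thurston plus Cheng--Yau, essentially as you describe; just don't attribute it to the abstract machinery.
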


Key to our proof is the notion of a robust family, which we now describe.

\begin{dfn}[Robust Family]
\label{dfn:robust family}    
Let $G<{\rm SL}(d,\mb{R})$ be a linear Lie group. A {\em robust family of $p$-submanifolds} for $G$ is a collection $\mathcal{M}$ of smooth contractible $p$-submanifolds $M\subset\mb{R}^d$ endowed with complete Riemannian metrics $\sigma_M$ that satisfies the following conditions.
\begin{enumerate}
    \item\label{lab:invariance}{{\slshape Invariance}. If $M\in\mathcal{M}$ and $g\in G$ then $g(M)\in\mathcal{M}$ and $\sigma_{g(M)}=(g^{-1})^*\sigma_M$.}
    \item\label{lab:compactness}{{\slshape Compactness}. Consider
    \[
    \mathcal{PM}:=\{(M,o)\,|\,M\in\mathcal{M}, o\in M\}.
    \]
    For every sequence $(M_n,o_n)\in\mathcal{PM}$ there is a sequence $g_n\in G$ and $(M,o)\in\mathcal{PM}$ such that $(g_n(M_n),g_n(o_n))$ converges to $(M,o)$ in the pointed $\mathcal{C}^2$-topology on $\mathcal{PM}$ (see Definition \ref{dfn:pointed topology} below).}  
    \item\label{lab:avoidance}{{\slshape Avoidance}. Let $\partial G\subset\mb{P}{\rm M}(d,\mb{R})$ denote the boundary of the projection of $G$ to the projectivization of the space of $d\times d$ matrices ${\rm M}(d,\mb{R})$. Consider the family of subspaces
    \[
    \mathcal{K}:=\{{\rm Ker}(\phi)\subset\mb{R}^d\,|\,\phi\in\partial G\subset\mb{P}{\rm M}(d,\mb{R})\}.
    \]
    We have $M\subsetneq V$ for every $V\in\mathcal{K}$ and $M\in\mathcal{M}$.}
    \item\label{lab:domination}{{\slshape Domination}. There exists a continuous function
    \[
    d_\bullet:\mathcal{PM}\to\mathcal{C}(\mb{R}^d,[0,\infty))
    \]
    with the following properties. 
    \begin{enumerate}
        \item\label{lab:domination1}{$d_{g(M,o)}\circ g=d_{(M,o)}$ for every $g\in G, (M,o)\in\mathcal{PM}$.}
        \item\label{lab:domination2}{$d_M(o,x)\le d_{(M,o)}(x)$ for every $x\in M$.}      
        \item\label{lab:domination3}{Let $\Gamma$ be a finitely generated group. For every convergent sequence of representations $\rho_n:\Gamma\to G$ such that $\rho_n(\Gamma)$ preserves $M_n\in\mathcal{M}$ there exists a choice of basepoints $o_n\in M_n$ such that for every $\gamma\in\Gamma$
        \[
        d_{(M_n,o_n)}(\rho_n(\gamma)o_n)
        \]
        is uniformly bounded.}
    \end{enumerate}
    }
\end{enumerate}
\end{dfn}

The pointed $\mathcal{C}^2$-topology we put on the space $\mathcal{PM}$ in Property \eqref{lab:compactness} is slightly different from the one we would get by considering each $(M,o)\in\mathcal{PM}$ as an abstract complete Riemannian manifold (see for example \cite[Chapter 10]{Pe}) since we want to keep track also of the embedding $M\to\mb{R}^d$.

\begin{dfn}[Pointed $\mathcal{C}^2$-Topology]
\label{dfn:pointed topology}
A sequence $(M_n,o_n)\in\mathcal{PM}$ {\em converges to $(M,o)\in\mathcal{PM}$ in the pointed $\mathcal{C}^2$-topology} if for every $R>0$ and $n$ large enough there exists an open subset $\Omega\subset M$ containing $B(o,R)$ and a smooth 2-bilipschitz embedding $f_n:\Omega\to M_n$ such that 
\begin{enumerate}
    \item{$f_n(o)=o_n$.}
    \item{$B(o_n,R)\subset f_n(\Omega)$.}
    \item{$||f_n^*\sigma_{M_n}-\sigma_M||_{\mathcal{C}^2(B(o,R))}\to 0$.}
    \item{$f_n\to\iota$ on $B(o,R)$ where $\iota:M\to\mb{R}^d$ is the inclusion.}
\end{enumerate}
\end{dfn}

The following result shows that if a linear Lie group $G<{\rm SL}(d,\mb{R})$ admits a robust family of $p$-submanifolds then it admits geometric-like subspaces in ${\rm Hom}(\Gamma,G)$.

\begin{thmA}
\label{thm:main4}
Let $G<{\rm SL}(d,\mb{R})$ be a linear Lie group and $\mathcal{M}$ a {\rm robust family of $p$-submanifolds}. Let $\Gamma$ be a finitely generated group. Consider
\[
\mathcal{T}(\Gamma,G,\mathcal{M}):=\left\{\rho\in{\rm Hom}(\Gamma,G)\,\left|\,
\begin{array}{c}
     \text{\rm $\rho(\Gamma)$ preserves some $M\in\mathcal{M}$}
\end{array}\right.\right\}.
\]

If $\rho_n\in\T(\Gamma,G,\mathcal{M})$ is a sequence that converges in ${\rm Hom}(\Gamma,G)$, then there exists a sequence $g_n\in G$ such that, up to passing to subsequences, $\bar{\rho}_n:=g_n\rho_ng_n^{-1}$ converges to a representation $\bar{\rho}\in\T(\Gamma,G,\mathcal{M})$. In particular, the image of the projection of $\mathcal{T}(\Gamma,G,\mathcal{M})$ to the character variety ${\rm Hom}(\Gamma,G)\,//\,G$ is closed.
\end{thmA}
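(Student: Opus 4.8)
The proof chains the four axioms together. Since the sequence $\rho_n$ converges in ${\rm Hom}(\Gamma,G)$ and each $\rho_n(\Gamma)$ preserves some $M_n\in\mathcal{M}$, the domination axiom \eqref{lab:domination}, part \eqref{lab:domination3}, supplies basepoints $o_n\in M_n$ with $d_{(M_n,o_n)}(\rho_n(\gamma)o_n)\le C_\gamma$ uniformly in $n$, for every $\gamma\in\Gamma$. Feeding the pointed submanifolds $(M_n,o_n)\in\mathcal{PM}$ into the compactness axiom \eqref{lab:compactness} produces $g_n\in G$ and a limit $(M,o)\in\mathcal{PM}$ with $(\bar{M}_n,\bar{o}_n):=(g_n(M_n),g_n(o_n))\to(M,o)$ in the pointed $\mathcal{C}^2$-topology; in particular $M\in\mathcal{M}$. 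Put $\bar{\rho}_n:=g_n\rho_ng_n^{-1}$, so that $\bar{\rho}_n(\Gamma)$ preserves $\bar{M}_n$. Since $\bar{\rho}_n(\gamma)\bar{o}_n=g_n\rho_n(\gamma)o_n$, the equivariance \eqref{lab:domination1} transports the estimate to $d_{(\bar{M}_n,\bar{o}_n)}(\bar{\rho}_n(\gamma)\bar{o}_n)\le C_\gamma$. The plan is to show that a subsequence of $\bar{\rho}_n$ converges to some $\bar{\rho}$ preserving $M$.

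The crux — and the step I expect to be the main obstacle — is to keep the matrices $\bar{\rho}_n(\gamma)$ from degenerating, i.e. to show that each $(\bar{\rho}_n(\gamma))_n$ stays in a compact subset of $G$. Suppose not. As $G<{\rm SL}(d,\mb{R})$, after passing to a subsequence $\|\bar{\rho}_n(\gamma)\|\to\infty$ and $A_n:=\bar{\rho}_n(\gamma)/\|\bar{\rho}_n(\gamma)\|\to\phi$, where $\phi$ is singular since $\det(A_n)=\|\bar{\rho}_n(\gamma)\|^{-d}\to 0$, so that $[\phi]\in\partial G$. The avoidance axiom \eqref{lab:avoidance} gives $M\not\subset V:={\rm Ker}(\phi)$, so I may fix $x_0\in M$ with $\phi(x_0)\neq 0$ and approximate it by $x_n:=f_n(x_0)\in\bar{M}_n$, using the embeddings $f_n$ from Definition \ref{dfn:pointed topology} (so $x_n\to x_0$). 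Extrinsically the orbit escapes to infinity,
\[
\|\bar{\rho}_n(\gamma)x_n\|=\|\bar{\rho}_n(\gamma)\|\cdot\|A_nx_n\|\to\infty,\qquad \|A_nx_n\|\to\|\phi(x_0)\|>0,
\]
whereas intrinsically it stays bounded: $\bar{\rho}_n(\gamma)$ is a $\sigma_{\bar{M}_n}$-isometry by invariance \eqref{lab:invariance}, so combining \eqref{lab:domination2} with the triangle inequality,
\[
d_{\bar{M}_n}(\bar{o}_n,\bar{\rho}_n(\gamma)x_n)\le d_{(\bar{M}_n,\bar{o}_n)}(\bar{\rho}_n(\gamma)\bar{o}_n)+d_{\bar{M}_n}(\bar{o}_n,x_n)\le C_\gamma+2\,d_M(o,x_0)=:R
\]
for $n$ large. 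Thus $\bar{\rho}_n(\gamma)x_n\in B_{\bar{M}_n}(\bar{o}_n,R)$. Pulling these balls back by the $2$-bilipschitz maps $f_n$, which converge to the inclusion on the compact set $\bar{B}_M(o,2R)$, shows that $B_{\bar{M}_n}(\bar{o}_n,R)$ is \emph{uniformly} bounded in $\mb{R}^d$ — contradicting the blow-up. Hence each $(\bar{\rho}_n(\gamma))_n$ is precompact in $G$.

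Granting precompactness, a diagonal extraction over a finite generating set of $\Gamma$ gives a subsequence with $\bar{\rho}_n(\gamma)\to\bar{\rho}(\gamma)\in G$ for each generator; as ${\rm Hom}(\Gamma,G)$ is closed in the product of copies of $G$ indexed by the generators, the limits assemble into $\bar{\rho}=\lim\bar{\rho}_n\in{\rm Hom}(\Gamma,G)$. To see that $\bar{\rho}(\Gamma)$ preserves $M$, fix $\gamma$ and $x_0\in M$ and set $y_n:=\bar{\rho}_n(\gamma)f_n(x_0)\in\bar{M}_n$; then $y_n\to\bar{\rho}(\gamma)x_0$, and the estimate of the previous paragraph confines $y_n$ to a fixed ball $B_{\bar{M}_n}(\bar{o}_n,R)$. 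Pointed $\mathcal{C}^2$-convergence lets me write $y_n=f_n(z_n)$ with $z_n$ in the compact set $\bar{B}_M(o,2R)\subset M$; along a further subsequence $z_n\to z\in M$, and since $f_n\to\iota$ uniformly there, $y_n\to z$. Therefore $\bar{\rho}(\gamma)x_0=z\in M$, giving $\bar{\rho}(\gamma)(M)\subseteq M$; applying this to $\gamma^{-1}$ yields equality, so $\bar{\rho}(\Gamma)$ preserves $M\in\mathcal{M}$ and $\bar{\rho}\in\T(\Gamma,G,\mathcal{M})$.

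Finally, the assertion about the character variety is the formal consequence of what precedes: the argument shows that any sequence in $\T(\Gamma,G,\mathcal{M})$ converging in ${\rm Hom}(\Gamma,G)$ admits, after conjugation by suitable $g_n\in G$, a subsequential limit again in $\T(\Gamma,G,\mathcal{M})$; since $\T(\Gamma,G,\mathcal{M})$ is $G$-invariant, its image in ${\rm Hom}(\Gamma,G)\,//\,G$ therefore contains its limit points and is closed.
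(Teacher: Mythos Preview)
Your proof is correct and follows essentially the same approach as the paper's. The only cosmetic difference lies in the organization of the contradiction step: the paper shows that unboundedness of $\bar{\rho}_n(\gamma)$ forces $M\subset{\rm Ker}(\phi_\gamma)$ by running over all $x\in B(o,r)$ for arbitrary $r$, whereas you invoke avoidance first to select a single $x_0\notin{\rm Ker}(\phi)$ and then derive the contradiction at that point; these are contrapositives of the same argument and rest on the identical mechanism (the intrinsic displacement bound plus pointed $\mathcal{C}^2$-convergence confines $\bar{\rho}_n(\gamma)f_n(x)$ to a compact set in $\mb{R}^d$, while $\phi(x)\neq 0$ forces it to escape).
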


Note that there is no restriction on the group $\Gamma$ and on the representations besides the fact that they preserve some $M\in\mathcal{M}$ (for example, they might fail to be injective or have discrete image). In the case where  $\Gamma$ does not have any infinite nilpotent normal subgroup and each $\rho_n$ is discrete and faithful, the limit $\bar{\rho}$ is also discrete and faithful by standard consequences of the Kazhdan–Margulis–Zassenhaus Theorem. In particular, under the same assumptions on $\Gamma$, the image of the projection of the discrete and faithful locus of $\T(\Gamma,G,\mathcal{M})$ to the character variety is also closed. Theorem \ref{thm:main4} also extends some work of Cooper and Tillmann \cite{CT}. Let us mention that they also obtain, as an application, a short proof of Benoist and Koszul's result.

\subsection*{Comments}
The definition of a robust family is inspired by the properties shared by many spaces of solutions of suitable {\em asymptotic Plateau problems}. Important examples are the ones giving rise to maximal $p$-submanifolds in $\mb{H}^{p,q}$ (as studied by Labourie, Toulisse, and Wolf \cite{LTW} and Seppi, Smith, and Toulisse \cite{SST}) and hyperbolic affine $p$-spheres in $\mb{R}^{p+1}$ (as studied by Cheng and Yau \cite{CY1,CY2}). Our two main applications are for these two classes. In those cases, the dominating functions of Property \eqref{lab:domination} will be essentially logarithms of linear functions. This highlights a crucial point: While the relation between representations $\rho\in\T(\Gamma,G,\mathcal{M})$ and Riemannian metrics $(M,\sigma_M)$ can be quite involved, their interaction with linear functionals is straightforward.

In order to pass from Theorem \ref{thm:main4} to closed geometric subspaces of ${\rm Hom}(\Gamma,G)$, one has to argue that if each $\rho_n$ is discrete and faithful then, under suitable assumptions on $\Gamma$ (such as having cohomological dimension $p$ and no infinite nilpotent normal subgroup), the sequence of renormalizing elements $g_n$ must be bounded. In the settings of Theorems \ref{thm:main2} and \ref{thm:main3}, this is the only place where we use the fact that the action $\bar{\rho}(\Gamma)\curvearrowright M$ is properly discontinuous and cocompact. 

Lastly, let us comment openness in Theorems \ref{thm:main2} and \ref{thm:main3}. This is very general and boils down to a combination of the Ehresmann-Thurston principle and the existence of solutions of the releveant asymptotic Plateau problems \cite{SST,CY1,CY2}. We emphasize that the existence of solutions does not enter the proof of closedness which is the core of the argument.

Being based on very general principles, we hope that our approach might also be applicable to find other geometric components in ${\rm Hom}(\Gamma,G)$ for different Lie groups $G$ by considering suitable robust families in homogeneous spaces $G/H$.

\subsection*{Acknowledgements}
I would like to thank Andrea Seppi, Jeremy Toulisse, Jonas Beyrer, Fanny Kassel, Nicolas Tholozan, and Beatrice Pozzetti for useful discussions and generous feedback on a first draft of this paper.

\section{The proof of Theorem \ref{thm:main4}}
\label{sec:proof main1}

We prove the following more precise version of Theorem \ref{thm:main4}.

\begin{thm}
\label{thm:main4precise}
Let $G<{\rm SL}(d,\mb{R})$ be a linear Lie group and $\mathcal{M}$ a {\rm robust family of $p$-submanifolds} for $G$ (see Definition \ref{dfn:robust family}). Let $\Gamma$ be a finitely generated group. Consider a converging sequence $\rho_n:\Gamma\to G$ such that each $\rho_n(\Gamma)$ preserves some $M_n\in\mathcal{M}$. Let $o_n\in M_n$ be the basepoints provided by Property \eqref{lab:domination3}. Let $g_n\in G$ be elements such that $(g_n(M_n),g_n(o_n))\to(M,o)$ in $\mathcal{PM}$ as given by Property \eqref{lab:compactness}. Set $\bar{\rho}_n:=g_n\rho_ng_n^{-1}$. Then, up to passing to subsequences, $\bar{\rho}_n$ converges to a representation $\bar{\rho}$ preserving $M\in\mathcal{M}$. 
\end{thm}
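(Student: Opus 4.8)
The plan is to transfer the displacement bounds provided by Domination to the renormalized objects, to upgrade Riemannian boundedness to boundedness inside $\mb{R}^d$ via the pointed $\mathcal{C}^2$-convergence, and finally to use Avoidance to prevent $\bar{\rho}_n(\gamma)$ from escaping to infinity. Write $\bar{M}_n:=g_n(M_n)$ and $\bar{o}_n:=g_n(o_n)$, so that $(\bar{M}_n,\bar{o}_n)\to(M,o)$ in the pointed $\mathcal{C}^2$-topology and each $\bar{\rho}_n(\gamma)=g_n\rho_n(\gamma)g_n^{-1}$ preserves $\bar{M}_n$. First I would fix $\gamma\in\Gamma$ and bound the metric displacement $d_{\bar{M}_n}(\bar{o}_n,\bar{\rho}_n(\gamma)\bar{o}_n)$. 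By Property \eqref{lab:domination3} the quantity $d_{(M_n,o_n)}(\rho_n(\gamma)o_n)$ is uniformly bounded, say by $C_\gamma$; applying the equivariance \eqref{lab:domination1} along $g_n$ and then \eqref{lab:domination2} at $(\bar{M}_n,\bar{o}_n)$ yields $d_{\bar{M}_n}(\bar{o}_n,\bar{\rho}_n(\gamma)\bar{o}_n)\le C_\gamma$. I would also record, using Invariance \eqref{lab:invariance}, that $\bar{\rho}_n(\gamma)$ restricts to a Riemannian isometry of $(\bar{M}_n,\sigma_{\bar{M}_n})$, since it preserves $\bar{M}_n$ and hence $\sigma_{\bar{M}_n}=(\bar{\rho}_n(\gamma)^{-1})^*\sigma_{\bar{M}_n}$.

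Next I would show that for each fixed $R$ the metric balls $B_{\bar{M}_n}(\bar{o}_n,R)$ lie in a single compact subset of $\mb{R}^d$ for all large $n$. This is a direct consequence of Definition \ref{dfn:pointed topology}: choosing the comparison maps $f_n$ on a large enough $B(o,R')\subset\Omega$, the $2$-bilipschitz bound forces $f_n^{-1}(B_{\bar{M}_n}(\bar{o}_n,R))$ into a fixed compact ball of $M$, and the condition $f_n\to\iota$ then bounds its image in $\mb{R}^d$ uniformly. \textbf{This boundedness input, combined with Avoidance, is the core of the argument and the main obstacle.} Suppose, for a contradiction, that $\bar{\rho}_n(\gamma)$ is unbounded in ${\rm SL}(d,\mb{R})$. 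Passing to a subsequence, the projective classes converge, $[\bar{\rho}_n(\gamma)]\to[\phi]$ with $\phi$ singular and $\phi\in\partial G$, so that $V:={\rm Ker}(\phi)\in\mathcal{K}$. I claim $M\subseteq V$, contradicting Avoidance \eqref{lab:avoidance}. Indeed, fix $x\in M$ and set $x_n:=f_n(x)\in\bar{M}_n$, so $x_n\to x$. Since $\bar{\rho}_n(\gamma)$ is an isometry displacing $\bar{o}_n$ by at most $C_\gamma$, the point $\bar{\rho}_n(\gamma)x_n$ lies in $B_{\bar{M}_n}(\bar{o}_n,\,C_\gamma+2d_M(o,x))$, hence in a fixed compact subset of $\mb{R}^d$; thus $\bar{\rho}_n(\gamma)x_n/\|\bar{\rho}_n(\gamma)\|\to 0$. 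As this same sequence converges to $\phi(x)$, we conclude $\phi(x)=0$. Since $x\in M$ was arbitrary, $M\subseteq V$, the desired contradiction. Hence each $\bar{\rho}_n(\gamma)$ is bounded.

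Finally, since $\Gamma$ is finitely generated I would extract a single subsequence along which $\bar{\rho}_n(s)$ converges for every generator $s$; the limits are determinant-one matrices, and as $G$ is closed in ${\rm SL}(d,\mb{R})$ they lie in $G$. Multiplicativity of limits then produces a homomorphism $\bar{\rho}:\Gamma\to G$ with $\bar{\rho}_n\to\bar{\rho}$. It remains to verify that $\bar{\rho}$ preserves $M$. For $x\in M$, the points $\bar{\rho}_n(\gamma)f_n(x)\in\bar{M}_n$ stay within a fixed metric ball of $\bar{o}_n$ (again by the isometry property together with the displacement bound), so by pointed $\mathcal{C}^2$-convergence their $f_n$-preimages subconverge to some $z\in M$; since $f_n\to\iota$, passing to the limit gives $\bar{\rho}(\gamma)x=z\in M$. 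Thus $\bar{\rho}(\gamma)(M)\subseteq M$ for every $\gamma$, and applying this to $\gamma^{-1}$ upgrades the inclusion to $\bar{\rho}(\gamma)(M)=M$. Therefore $\bar{\rho}$ preserves $M\in\mathcal{M}$, which is the assertion of the theorem.
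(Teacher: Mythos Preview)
Your argument is correct and follows essentially the same strategy as the paper's proof: bound the Riemannian displacement $d_{\bar M_n}(\bar o_n,\bar\rho_n(\gamma)\bar o_n)$ via Domination, use the pointed $\mathcal{C}^2$-comparison maps $f_n$ to trap $\bar\rho_n(\gamma)f_n(x)$ in a bounded region of $\mb{R}^d$, and then derive a contradiction with Avoidance if $\bar\rho_n(\gamma)$ escapes to $\partial G$. The paper organizes the bookkeeping slightly differently---it tracks the pulled-back points $f_n^{-1}\bar\rho_n(\gamma)f_n(x)$ inside $M$ and uses $f_n\to\iota$ to pass to $\mb{R}^d$, whereas you first bound $B_{\bar M_n}(\bar o_n,R)$ in $\mb{R}^d$ and work there directly---but the content is the same.
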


\begin{proof}[Proof of Theorem \ref{thm:main4precise}]
Note that by Property \eqref{lab:invariance}, $\rho_n(\Gamma)$ acts by isometries on $M_n$.

By assumption, $(g_n(M_n),g_n(o_n))$ converges to $(M,o)$ in the pointed $\mathcal{C}^2$-topology on $\mathcal{PM}$. We recall that this means that for every $R>0$ and $n$ large enough there exists an open subset $\Omega\subset M$ containing $B(o,R)$ and a smooth 2-bilipschitz embedding $f_n:\Omega\to g_n(M_n)$ such that 
\begin{enumerate}[label={(\rm C\arabic*)},ref=C\arabic*]
    \item\label{lab:c1}{$f_n(o)=g_n(o_n)$.}
    \item\label{lab:c2}{$B(g_n(o_n),R)\subset f_n(\Omega)$.}
    \item\label{lab:c3}{$||f_n^*\sigma_{g_n(M_n)}-\sigma_M||_{\mathcal{C}^2(B(o,R))}\to 0$.}
    \item\label{lab:c4}{$f_n\to\iota$ on $B(o,R)$ where $\iota:M\to\mb{R}^d$ is the inclusion.}
\end{enumerate}

Again, Property \eqref{lab:invariance} implies that $\bar{\rho}_n(\Gamma)$ acts by isometries on $g_n(M_n)$. Our goal is to show that, up to passing to subsequences, the representations $\bar{\rho}_n$ converge to $\bar{\rho}_n\to\bar{\rho}$ preserving $M$. 

In order to get a limit representation $\bar{\rho}$, it is enough to show that $\bar{\rho}_n(\gamma)$ is bounded for every $\gamma\in\Gamma$. A standard diagonal argument would then provide a convergent subsequence $\bar{\rho}_n\to\bar{\rho}$. To show that $\bar{\rho}_n(\gamma)$ is bounded we will exploit the pointed $\mathcal{C}^2$-convergence of $(g_n(M_n),g_n(o_n))\to(M,o)$ and the domination properties. First, we need some estimates. For every $\gamma\in\Gamma$ we have 
\begin{align*}
d_{g_n(M_n)}(g_n(o_n),\bar{\rho}_n(\gamma)g_n(o_n)) &=d_{M_n}(o_n,\rho_n(\gamma)o_n) &\text{\rm by Property \eqref{lab:invariance}}\\
 &\le d_{(M_n,o_n)}(\rho_n(\gamma)o_n) &\text{\rm by Property \eqref{lab:domination2}}.
\end{align*}
Thus, by Property \eqref{lab:domination3}, $d_{g_n(M_n)}(g_n(o_n),\bar{\rho}_n(\gamma)g_n(o_n))$ is uniformly bounded, say by $B>0$. As a consequence, by the triangle inequality, for every $y\in g_n(M_n)$ we have
\begin{align*}
d_{g_n(M_n)}(g_n(o_n),\bar{\rho}_n(\gamma)y) &\le d_{g_n(M_n)}(g_n(o_n),\bar{\rho}_n(\gamma)g_n(o_n))+d_{g_n(M_n)}(\bar{\rho}_n(\gamma)g_n(o_n),\bar{\rho}_n(\gamma)y)\\
&\le B+d_{g_n(M_n)}(g_n(o_n),y).    
\end{align*}

Suppose by contradiction that $\bar{\rho}_n(\gamma)$ is unbounded. Then $a_n\bar{\rho}_n\to\phi_\gamma\in\partial G$ for some $a_n\to 0$. We will deduce that $M\subset{\rm Ker}(\phi_\gamma)$ contradicting Property \eqref{lab:avoidance}. Let $r>0$ be an arbitrary radius. Choose $R$ much larger than $2r+2B$. For $n$ large enough, let $f_n:\Omega\to M_n$ be the 2-bilipschitz approximating map defined over $B(o,R)\subset\Omega$ provided by the pointed $\mathcal{C}^2$-convergence. Note that, by \eqref{lab:c1} and the fact that $f_n$ is 2-bilipschitz, for every $x\in B(o,r)$ we have 
\[
d_{g_n(M_n)}(f_n(x),g_n(o_n))=d_{g_n(M_n)}(f_n(x),f_n(o))\le 2d_M(x,o)
\]
and, hence, $d_{g_n(M_n)}(g_n(o_n),\bar{\rho}_n(\gamma)f_n(x))\le B+2d_M(x,o)<R$ so that, by \eqref{lab:c2}, $f_n^{-1}\bar{\rho}_n(\gamma)f_n$ is well-defined on $B(o,r)$. We have 
\begin{align*}
&d_M(f_n^{-1}\bar{\rho}_n(\gamma)f_n(x),o) &\\
&\le 2d_{g_n(M_n)}(\bar{\rho}_n(\gamma)f_n(x),f_n(o)) &\text{\rm $f_n$ 2-bilipschitz}\\
&\le 2d_{g_n(M_n)}(\bar{\rho}_n(\gamma)o_n,f_n(o))+2d_{g_n(M_n)}(\bar{\rho}_n(\gamma)f_n(x),\bar{\rho}_n(\gamma)o_n) &\text{\rm triangle inequality}\\
&=2d_{g_n(M_n)}(\bar{\rho}_n(\gamma)f_n(o),f_n(o))+2d_{g_n(M_n)}(f_n(x),f_n(o)) &\text{\rm $\bar{\rho}_n(\gamma)$ isometry}\\
&\le2d_{g_n(M_n)}(\bar{\rho}_n(\gamma)o_n,o_n)+2d_{M}(x,o) &\text{\rm \eqref{lab:c1}+$f_n$ 2-bilipschitz}\\
&\le 2B+2d_M(x,o)<2B+2r.
\end{align*}

We conclude that $f_n^{-1}\bar{\rho}_n(\gamma)f_n(x)$ is uniformly bounded and, hence, it converges up to subsequences to some $y\in M$ (as $(M,\sigma_M)$ is complete). 

Consider now $x\in B(o,r)$ and $a_n\bar{\rho}_n(\gamma)f_n(x)=a_n f_n(f_n^{-1}\bar{\rho}_n(\gamma)f_n)(x)$. By \eqref{lab:c4}, we have $f_n\to\iota$, hence the left-hand side converges to $\phi_\gamma(x)$. Instead, the right-hand side converges to 0 as $a_n\to 0,f_n\to\iota,f_n^{-1}\bar{\rho}_n(\gamma)f_n(x)\to y$. Therefore $x\in{\rm Ker}(\phi_\gamma)$ for every $x\in B(o,r)$. As $r$ was arbitrary, it follows that $M\subset{\rm Ker}(\phi_\gamma)$ giving the desired contradiction. 

Thus, for every $\gamma\in\Gamma$ the sequence $\bar{\rho}_n(\gamma)$ must be bounded. Additionally, the above argument shows also that for every $x\in M$ and $n$ large enough $f_n^{-1}\bar{\rho}_n(\gamma)f_n(x)$ is well-defined. Up to passing to subsequences, we can assume that $\bar{\rho}_n(\gamma)\to\bar{\rho}(\gamma)$ for every $\gamma\in\Gamma$. As $f_n$ converges to the inclusion $\iota:M\to\mb{R}^d$ and $f_n^{-1}\bar{\rho}_n(\gamma)f_n(x)\in M$, we conclude that $\bar{\rho}(\gamma)x\in M$ for every $x\in M$, or, in other words, that $M$ is $\bar{\rho}(\Gamma)$-invariant. This finishes the proof of the theorem.
\end{proof}

\begin{remark}
\label{rmk:discrete and faithful}
By classical consequences of the Kazhdan–Margulis–Zassenhaus Theorem, if $\Gamma$ does not have any infinite nilpotent normal subgroup and each $\rho_n$ is discrete and faithful, then $\bar{\rho}$ is also discrete and faithful.    
\end{remark}

\subsection{The case of discrete representations}
While the previous discussion did not require discreteness of the representations, in our main applications we will have to keep track of this property. It is worth observing that for robust families, discreteness in the ambient Lie group is essentially equivalent to Riemannian discreteness as the following proposition shows.

\begin{pro}
\label{pro:discrete}
Let $G<{\rm SL}(d,\mb{R})$ be a linear Lie group and $\mathcal{M}$ a {\rm robust family of $p$-submanifolds} for $G$ (see Definition \ref{dfn:robust family}). Fix $(M,\sigma_M)\in\mathcal{M}$ and consider ${\rm Stab}(M):=\{g\in G\,|\,g(M)=M\}$. By Property \eqref{lab:invariance}, there is a continuous homomorphism $\mu:{\rm Stab}(M)\to{\rm Isom}(M,\sigma_M)$. Then $\mu$ is proper.
\end{pro}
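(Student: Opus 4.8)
The plan is to verify properness in its sequential form: I will show that whenever $g_n\in{\rm Stab}(M)$ is a sequence such that $\mu(g_n)$ stays in a fixed compact subset $K\subset{\rm Isom}(M,\sigma_M)$, the sequence $g_n$ admits a subsequence converging in $G$ to an element of ${\rm Stab}(M)$ whose image under $\mu$ lies in $K$. Since ${\rm Stab}(M)$ is metrizable, this shows that $\mu^{-1}(K)$ is sequentially compact, hence compact, for every compact $K$, which is exactly the properness of $\mu$.

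First I would record the Riemannian input. By the Myers–Steenrod theorem ${\rm Isom}(M,\sigma_M)$ is a Lie group acting continuously on $M$, and since $(M,\sigma_M)$ is complete it is a proper metric space by Hopf–Rinow; consequently the isometric action on $M$ is proper, so for every compact $K\subset{\rm Isom}(M,\sigma_M)$ and every $x\in M$ the orbit $\{\phi(x)\mid\phi\in K\}$ is relatively compact in $M$. The observation linking this to $G$ is that, by Property \eqref{lab:invariance}, the homomorphism $\mu$ is simply restriction, $\mu(g)=g|_M$. Therefore, if $\mu(g_n)\in K$, then for each fixed $x\in M$ the points $g_n(x)=\mu(g_n)(x)$ lie in a fixed compact subset of $M$, and hence---since the inclusion $M\hookrightarrow\mb{R}^d$ is continuous---in a fixed bounded subset of $\mb{R}^d$.

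The heart of the argument is to promote this intrinsic boundedness on $M$ to boundedness of the matrices $g_n$ in $G$, and this is exactly where Property \eqref{lab:avoidance} enters. Suppose, for contradiction, that $g_n$ is unbounded in $G$. Passing to a subsequence and setting $a_n:=\|g_n\|^{-1}\to 0$, the rescaled matrices $a_ng_n$ converge to some $\phi$ of unit norm; since $\det(a_ng_n)=a_n^d\to 0$, the limit $\phi$ is singular, so $[\phi]\in\partial G$ and ${\rm Ker}(\phi)\in\mathcal{K}$. But for every $x\in M$ we have $a_ng_n(x)\to 0$, because $a_n\to 0$ while $g_n(x)$ stays bounded in $\mb{R}^d$ by the previous paragraph; on the other hand $a_ng_n(x)\to\phi(x)$, so $\phi(x)=0$. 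Thus $M\subset{\rm Ker}(\phi)$, contradicting Property \eqref{lab:avoidance}. Hence $g_n$ is bounded. I expect this step to be the only genuine obstacle: a priori the action of ${\rm Stab}(M)$ on $M$ could remain bounded while the ambient matrices escape to infinity in $G$, and it is precisely the avoidance hypothesis (the submanifolds never lie inside the kernel of a boundary degeneration) that rules this out.

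It then remains to identify the limit. Since $g_n$ is bounded and $G$ is closed in ${\rm SL}(d,\mb{R})$, after passing to a subsequence $g_n\to g\in G$; after a further subsequence $\mu(g_n)\to\psi\in K$ in ${\rm Isom}(M,\sigma_M)$. Passing to the limit in the identity $g_n(x)=\mu(g_n)(x)$ gives $g(x)=\psi(x)$ for every $x\in M$, so $g|_M=\psi$ is a bijection of $M$; in particular $g(M)=\psi(M)=M$, that is $g\in{\rm Stab}(M)$, and $\mu(g)=\psi\in K$. This produces the desired convergent subsequence in $\mu^{-1}(K)$ and completes the verification that $\mu$ is proper.
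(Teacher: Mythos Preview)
Your proof is correct and the core step---using Property~(3) to rule out unboundedness of $g_n$ via $a_ng_n(x)\to\phi(x)$ versus $a_ng_n(x)\to 0$---is exactly the paper's argument. The only difference is in the framing: the paper first reduces properness to compactness of ${\rm Ker}(\mu)$ (invoking the general fact that a continuous homomorphism between locally compact groups is proper iff its kernel is compact), and then runs the avoidance argument for sequences $\gamma_n\in{\rm Ker}(\mu)$, where $\gamma_n(x)=x$ is trivially bounded. You instead verify sequential compactness of $\mu^{-1}(K)$ directly for an arbitrary compact $K$, which costs a few extra lines (extracting $\psi$ and identifying $g|_M=\psi$) but is more self-contained and sidesteps the need to justify that the general reduction applies here (in particular the implicit closed-image condition).
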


\begin{proof}
Being a continuous homomorphism between locally compact topological groups, $\mu$ is proper if and only if ${\rm Ker}(\mu)$ is compact. Suppose by contradiction that we have an unbounded sequence $\gamma_n\in{\rm Ker}(\mu)$. Then there is a sequence $a_n\to 0$ such that $a_n\gamma_n\to\phi$ that projects to $\partial G$ in $\mb{P}{\rm M}(d,\mb{R})$. By Property \eqref{lab:avoidance}, there is a point $x\in M$ not contained in ${\rm Ker}(\phi)$. On the one hand, we have $a_n\gamma_n x\to\phi(x)$. On the other hand, $a_n\gamma_nx=a_nx\to 0$ since $a_n\to 0$. Thus $x\in{\rm Ker}(\phi)$ and we reached a contradiction. 
\end{proof}

\begin{cor}
\label{cor:discrete and faithful}
Let $\Gamma$ be a torsion-free group. Then $\rho:\Gamma\to{\rm Stab}(M)$ is discrete and faithful if and only if $\mu\rho:\Gamma\to{\rm Isom}(M,\sigma_M)$ is discrete and faithful.    
\end{cor}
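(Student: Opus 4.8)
The plan is to exploit the single structural fact furnished by Proposition \ref{pro:discrete}, namely that $\mu$ is proper, equivalently that its kernel $K:={\rm Ker}(\mu)$ is compact. Both ${\rm Stab}(M)$ and ${\rm Isom}(M,\sigma_M)$ are locally compact Hausdorff groups (the latter a Lie group by Myers--Steenrod), so throughout ``discrete'' means that the identity is isolated in the image; I will freely use that a discrete subgroup of a locally compact group is closed. Faithfulness of $\mu\rho$ trivially forces faithfulness of $\rho$, since if a composition $\mu\circ\rho$ is injective then $\rho$ is injective. The real content therefore lies in the interplay between discreteness and the compact kernel $K$, and in recovering faithfulness of $\mu\rho$ from that of $\rho$.

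For the implication ``$\mu\rho$ discrete and faithful $\Rightarrow\rho$ discrete and faithful'' I would argue as follows. Faithfulness of $\rho$ is immediate. For discreteness, pick an open neighborhood $U$ of the identity in ${\rm Isom}(M,\sigma_M)$ with $U\cap\mu\rho(\Gamma)=\{e\}$. Then $\mu^{-1}(U)$ is an open neighborhood of the identity in ${\rm Stab}(M)$, and any $\gamma$ with $\rho(\gamma)\in\mu^{-1}(U)$ satisfies $\mu\rho(\gamma)\in U\cap\mu\rho(\Gamma)=\{e\}$, hence $\gamma\in{\rm Ker}(\mu\rho)=\{e\}$ by faithfulness. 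Thus $\rho(\Gamma)\cap\mu^{-1}(U)=\{e\}$ and $\rho(\Gamma)$ is discrete. This direction uses neither properness nor the torsion-free hypothesis.

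The reverse implication ``$\rho$ discrete and faithful $\Rightarrow\mu\rho$ discrete and faithful'' is where properness and torsion-freeness enter. For faithfulness, note ${\rm Ker}(\mu\rho)=\rho^{-1}(\rho(\Gamma)\cap K)$; the subgroup $\rho(\Gamma)\cap K$ is discrete (being a subgroup of the discrete, hence closed, $\rho(\Gamma)$) and contained in the compact $K$, so it is finite. As $\Gamma$ is torsion-free and $\rho$ faithful, $\rho(\Gamma)$ is torsion-free, forcing $\rho(\Gamma)\cap K=\{e\}$ and hence ${\rm Ker}(\mu\rho)=\{e\}$. For discreteness, fix a compact neighborhood $W$ of the identity in ${\rm Isom}(M,\sigma_M)$; by properness $\mu^{-1}(W)$ is compact, so the closed discrete set $\rho(\Gamma)\cap\mu^{-1}(W)$ is finite, whence $\mu\rho(\Gamma)\cap W\subseteq\mu\bigl(\rho(\Gamma)\cap\mu^{-1}(W)\bigr)$ is finite. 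A subgroup meeting a neighborhood of the identity in a finite set is discrete in a Hausdorff group, so $\mu\rho(\Gamma)$ is discrete.

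The step I expect to be the genuine pressure point is the faithfulness of $\mu\rho$ in the reverse direction: this is exactly where the torsion-free hypothesis cannot be dispensed with, since otherwise the compact kernel $K$ could intersect $\rho(\Gamma)$ in a nontrivial finite group and $\mu\rho$ would fail to be injective. Everything else is a formal manipulation of the properness of $\mu$ together with the closedness of discrete subgroups in locally compact groups.
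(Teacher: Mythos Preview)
Your argument is correct and is precisely the routine unpacking the paper leaves implicit: the corollary is stated without proof immediately after Proposition~\ref{pro:discrete}, so the paper's ``proof'' is just the assertion that properness of $\mu$ (equivalently compactness of ${\rm Ker}(\mu)$) yields the result. Your two directions, and in particular your identification of the torsion-free hypothesis as exactly what kills $\rho(\Gamma)\cap K$, match what the author has in mind.
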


Recalling that manifolds $M$ in robust families $\mathcal{M}$ are contractible and that discrete and torsion-free subgroups of ${\rm Isom}(M,\sigma_M)$ act properly discontinuously and freely on $M$, we obtain the following.

\begin{cor}
\label{cor:cohomological dimension}
Let $\Gamma$ be a torsion-free group of cohomological dimension $p$. If $\rho:\Gamma\to G$ is a discrete and faithful representation preserving a $p$-submanifold $M\in\mathcal{M}$ in a robust family $\mathcal{M}$ for $G$. Then the action $\rho(\Gamma)\curvearrowright M$ is free, properly discontinuous, and cocompact.  
\end{cor}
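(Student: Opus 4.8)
The plan is to push $\rho$ into the isometry group via the dictionary already established, read off freeness and proper discontinuity from standard Riemannian geometry, and then extract cocompactness by confronting the hypothesis $\mathrm{cd}(\Gamma)=p$ with the equality $\dim M=p$. First I would note that since $\rho(\Gamma)$ preserves $M$, the representation factors as $\rho:\Gamma\to{\rm Stab}(M)$, and discreteness in $G$ coincides with discreteness in ${\rm Stab}(M)$ because the latter is closed in $G$. By Corollary \ref{cor:discrete and faithful} the composition $\mu\rho:\Gamma\to{\rm Isom}(M,\sigma_M)$ is then discrete and faithful. As $(M,\sigma_M)$ is complete, ${\rm Isom}(M,\sigma_M)$ is a Lie group acting properly on $M$ by Myers--Steenrod, so its discrete subgroup $\mu\rho(\Gamma)$ acts properly discontinuously; in particular every point stabilizer is finite. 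Since $\Gamma$ is torsion-free these stabilizers are trivial, so the action $\rho(\Gamma)\curvearrowright M$ is free. Consequently the quotient $N:=M/\rho(\Gamma)$ is a smooth $p$-manifold without boundary, and, $M$ being contractible, $N$ is aspherical with $\pi_1(N)=\Gamma$, i.e. a $K(\Gamma,1)$.

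The essential remaining point is cocompactness of the action, equivalently compactness of $N$, and here I would argue by contradiction. Suppose $N$ is non-compact. Being a connected open $p$-manifold without boundary, it is homotopy equivalent to a CW complex of dimension at most $p-1$: this is a standard consequence of Morse theory (a non-compact manifold admits a proper Morse function without local maxima, hence a handle decomposition with handles of index $\le p-1$). Therefore $H^p(N;\mathcal{A})=0$ for every local system $\mathcal{A}$. On the other hand, $\mathrm{cd}(\Gamma)=p$ furnishes a $\Gamma$-module $A$ with $H^p(\Gamma;A)\ne 0$, and asphericity of $N$ gives $H^p(N;\mathcal{A})\cong H^p(\Gamma;A)\ne 0$, a contradiction. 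Hence $N$ is compact and the action is cocompact, completing the proof.

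I expect the main obstacle to be exactly this last step. Freeness and proper discontinuity are formal once Corollary \ref{cor:discrete and faithful} is in hand, but cocompactness rests entirely on the topological input that an open $p$-manifold has the homotopy type of a complex of dimension at most $p-1$; the sharp matching of $\mathrm{cd}(\Gamma)$ with $\dim M$ is what turns non-cocompactness into a drop in cohomological dimension and thus into a contradiction. The only point requiring mild care is ensuring $N$ is genuinely a boundaryless manifold and connected, which follows since $M$ is a boundaryless contractible submanifold of $\mathbb{R}^d$ and the action is free and properly discontinuous.
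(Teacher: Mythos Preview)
Your argument is correct and matches the paper's approach: the paper states the corollary without proof, preceded only by the one-sentence observation that $M$ is contractible and that discrete torsion-free subgroups of ${\rm Isom}(M,\sigma_M)$ act freely and properly discontinuously. Your write-up simply spells out what the paper leaves implicit, including the standard cohomological-dimension argument for cocompactness (an open $p$-manifold has the homotopy type of a $(p{-}1)$-complex, forcing $H^p(\Gamma;-)\equiv 0$ and contradicting ${\rm cd}(\Gamma)=p$).
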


\section{Maximal $p$-submanifolds in $\mb{H}^{p,q}$}

In this section, we first quickly review the pseudo-Riemannian hyperbolic space $\mb{H}^{p,q}$ and its maximal $p$-submanifolds. Then, we prove that these manifolds form a robust family (Theorem \ref{thm:maximal is robust}). Lastly, we prove Theorem \ref{thm:main2}.

\subsection{Pseudo-Riemannian hyperbolic spaces and maximal submanifolds}

We denote by $\mb{R}^{p,q}$ the Euclidean space $\mb{R}^{p+q+1}$ equipped with the quadratic form 
\[
\langle z,z'\rangle:=x_1x_1'+\ldots+x_px_p'-y_1y_1'-\ldots-y_{q+1}y_{q+1}'.
\]

\begin{dfn}[pseudo-Riemannian hyperbolic Space]
The {\em pseudo-Riemannian hyperbolic space} $\mb{H}^{p,q}$ is 
\[
\mb{H}^{p,q}:=\{z\in\mb{R}^{p,q}\,|\,\langle z,z\rangle=-1\}
\]
equipped with the pseudo-Riemannian metric $g_{\mb{H}^{p,q}}$ of signature $(p,q)$ induced by the restriction of the quadratic form $\langle\bullet,\bullet\rangle$ to the tangent spaces $T_z\mb{H}^{p,q}=z^\perp$.    
\end{dfn}

In general, smooth submanifolds of $\mb{H}^{p,q}$ do not inherit any pseudo-Riemannian metric. Of particular importance among those for which the restriction of $g_{\mb{H}^{p,q}}$ to the tangent spaces has constant signature are the Riemannian ones (the so-called spacelike submanifolds) of maximal dimension. 

\begin{dfn}[Spacelike $p$-Submanifold]
A smooth $p$-submanifold $M\subset\mb{H}^{p,q}$ is {\em spacelike} if the restriction of $g_{\mb{H}^{p,q}}$ to $T_zM$ is positive definite for every $z\in M$. In particular, $g_{\mb{H}^{p,q}}$ induces a Riemannian metric on $M$. The spacelike $p$-submanifold is said to be {\em complete} if it is complete in the Riemannian sense.    
\end{dfn}

We can now give the definition of a maximal $p$-submanifold.

\begin{dfn}[Maximal $p$-Submanifold]
A spacelike $p$-submanifold $M\subset\mb{H}^{p,q}$ is {\em maximal} if it has vanishing mean-curvature, that is, the second fundamental form $\mb{I}_M$ of $M$ satisfies the following equation ${\rm tr}_M(\mb{I}_M)=0$.
\end{dfn}

To better understand the moduli space of spacelike or maximal $p$-submanifolds there are convenient global atlases of $\mb{H}^{p,q}$, the so-called Poincaré models, where they display a particularly nice form (for an in-depth discussion see \cite[Section 3]{SST} and \cite[Section 3]{BK}).

\begin{dfn}[Poincaré Model]
\label{dfn:poincare model}
Fix an orthogonal splitting $\mb{R}^{p,q}=E\oplus F$ with $E$ of signature $(p,0)$ and $F=E^\perp$. Denote by $\mb{D}^p\subset E$ the open unit ball and by $\mb{S}^q\subset F$ the unit sphere. There is a smooth diffeomorphism $\Psi:\mb{D}^p\times\mb{S}^q\to\mb{H}^{p,q}$ (the so-called {\em Poincaré model}) defined by
\[
\Pi(u,v):=\frac{2}{1-|u|^2}u+\frac{1+|u|^2}{1-|u|^2}v.
\]
The {\em spherical metrics} on $\mb{D}^p,\mb{S}^q$ are given respectively by 
\[
\cos(d_{\mb{D}^p}(u,u')):=\frac{4\langle u,u'\rangle+(1-|u|^2)(1-|u'|)^2}{(1+|u|^2)(1+|u'|^2)}\quad\text{\rm and}\quad\cos(d_{\mb{S}^q}(v,v')):=|\langle v,v'\rangle|.
\]
\end{dfn}

We have the following.

\begin{pro}[{see \cite[Lemma 3.11]{SST}}]
\label{pro:topology}
Let $M\subset\mb{H}^{p,q}$ be a complete spacelike $p$-submanifold. In any Poincaré model $\mb{D}^p\times\mb{S}^q\simeq\mb{H}^{p,q}$, the manifold $M$ is the graph of a smooth function $u:\mb{D}^p\to\mb{S}^q$ which is 1-Lipschitz with respect to the spherical metrics on domain and target. In particular, $M$ is diffeomorphic to $\mb{R}^p$.
\end{pro}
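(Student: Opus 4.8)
The plan is to realize $M$ as a graph over the disk factor by combining a transversality argument (which gives a local graph) with completeness (which makes it global), reading off the Lipschitz bound from the sign structure of the ambient metric in Poincaré coordinates. Throughout, write $\pi_{\mb{D}}$ and $\pi_{\mb{S}}$ for the two projections of $\mb{D}^p\times\mb{S}^q\cong\mb{H}^{p,q}$, and assume $M$ connected (as the final clause presupposes).

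First I would record the shape of the ambient metric in the coordinates $\Pi(w,v)$, with $w\in\mb{D}^p\subset E$ and $v\in\mb{S}^q\subset F$. Differentiating $\Pi$, the cross terms coming from the derivatives of the scalar factors $\tfrac{2}{1-|w|^2}$ and $\tfrac{1+|w|^2}{1-|w|^2}$ cancel (a short computation, also in \cite{SST,BK}), leaving the conformal form
\[
\Pi^*g_{\mb{H}^{p,q}}=\frac{(1+|w|^2)^2}{(1-|w|^2)^2}\big(\hat g_{\mb{D}^p}-\hat g_{\mb{S}^q}\big),
\]
where $\hat g_{\mb{D}^p},\hat g_{\mb{S}^q}$ are the spherical metrics. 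Thus a tangent vector $(\dot w,\dot v)$ is spacelike exactly when $|\dot v|_{\hat g_{\mb{S}^q}}<|\dot w|_{\hat g_{\mb{D}^p}}$; in particular the vertical fibres $\{w\}\times\mb{S}^q$ are timelike, which one also sees directly since they are round spheres inside the negative-definite subspace $F$.

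Next I would show that $\pi:=\pi_{\mb{D}}|_M:M\to\mb{D}^p$ is a local diffeomorphism. Its differential has kernel $T_zM\cap T_z(\text{fibre})$, but $T_zM$ is positive definite and the fibre is negative definite, so the intersection is $0$, and by equality of dimensions $d\pi$ is an isomorphism. Hence locally $M$ is a graph $v=u(w)$, and the spacelike condition applied to $(\dot w,du_w\dot w)$ reads $\|du_w\|_{\mathrm{op}}<1$ with respect to the spherical metrics; integrating along minimizing geodesics of the (geodesically convex) open hemisphere $(\mb{D}^p,\hat g_{\mb{D}^p})$ then shows $u$ is $1$-Lipschitz wherever it is defined.

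The main work, and the crux, is to upgrade this local graph to a global one, i.e. to prove $\pi$ is a diffeomorphism onto all of $\mb{D}^p$; this is where completeness enters. I would establish the path-lifting property for $\pi$. A path $\gamma$ between two interior points of $\mb{D}^p$ stays in a compact set $\{|w|\le 1-\ep\}$ on which the conformal factor is bounded by some $C_\ep$; since $\sigma_M\le C(w)\,\hat g_{\mb{D}^p}$ (because $u^*\hat g_{\mb{S}^q}\ge 0$), any lift $\tilde\gamma$ has $\sigma_M$-length at most $\sqrt{C_\ep}\,\mathrm{length}(\gamma)<\infty$. Completeness of $(M,\sigma_M)$ then forces the lift to converge at its endpoint, so it extends. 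A local diffeomorphism with path-lifting over the connected, locally path-connected base $\mb{D}^p$ is a covering map; as $\mb{D}^p$ is simply connected and $M$ connected, $\pi$ is a diffeomorphism. Therefore $M$ is the graph of the globally defined $1$-Lipschitz map $u=\pi_{\mb{S}}\circ\pi^{-1}:\mb{D}^p\to\mb{S}^q$, and since $\mb{D}^p\cong\mb{R}^p$ we conclude $M\cong\mb{R}^p$. The delicate point is that the hemisphere $(\mb{D}^p,\hat g_{\mb{D}^p})$ is itself \emph{incomplete}, so completeness of $M$ is reconciled with this only through the blow-up of the conformal factor at $\partial\mb{D}^p$; it is precisely the one-sided bound $\sigma_M\le C(w)\,\hat g_{\mb{D}^p}$ with $C$ locally finite, and not any two-sided metric comparison, that makes the lifting argument succeed.
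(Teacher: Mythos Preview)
The paper does not prove this proposition; it simply quotes it from \cite[Lemma 3.11]{SST}. Your argument is correct and is essentially the standard one: the conformal expression $\Pi^*g_{\mb{H}^{p,q}}=\tfrac{(1+|w|^2)^2}{(1-|w|^2)^2}(\hat g_{\mb{D}^p}-\hat g_{\mb{S}^q})$ (which indeed checks out, the cross terms cancel because $da=db$ for the two radial factors) makes the vertical fibres timelike, so $\pi_{\mb{D}}|_M$ is a local diffeomorphism, and the one-sided bound $\sigma_M\le C(w)\,\pi^*\hat g_{\mb{D}^p}$ together with completeness gives path-lifting, hence a covering of the simply connected disk. The only cosmetic point is that the paper's $d_{\mb{S}^q}$ uses $|\langle v,v'\rangle|$ (so is bounded by $\pi/2$), but since this is dominated by the round distance your integration argument still yields the stated $1$-Lipschitz bound.
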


Proposition \ref{pro:topology} allows us to equip the space of complete spacelike $p$-submanifolds with a nice topology.

\begin{dfn}[Smooth Convergence Topology]
\label{dfn:smooth conv}
The {\em topology of smooth convergence} on the space of complete spacelike $p$-submanifolds is the topology induced by smooth convergence on compact subsets on the space of smooth functions $\mathcal{C}^\infty(\mb{D}^p,\mb{S}^q)$. This topology is independent of the choice of a Poincaré model for $\mb{H}^{p,q}$.     
\end{dfn}

\begin{lem}
\label{lem:smooth implies pointed}
Smooth convergence implies pointed $\mathcal{C}^2$-convergence.
\end{lem}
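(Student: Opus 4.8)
The plan is to work inside a fixed Poincaré model $\mb{D}^p\times\mb{S}^q\simeq\mb{H}^{p,q}$ and to exploit Proposition \ref{pro:topology}, which writes every complete spacelike $p$-submanifold as the graph of a $1$-Lipschitz map $\mb{D}^p\to\mb{S}^q$. Suppose $M_n\to M$ in the topology of smooth convergence (Definition \ref{dfn:smooth conv}) and fix basepoints $o_n\in M_n$, $o\in M$ with $o_n\to o$. Writing $M,M_n$ as graphs of $u,u_n:\mb{D}^p\to\mb{S}^q$, smooth convergence means precisely $u_n\to u$ in $\mathcal{C}^\infty_{\mathrm{loc}}(\mb{D}^p)$. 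Let $\pi,\pi_n$ be the projections to the $\mb{D}^p$-factor restricted to $M,M_n$ (diffeomorphisms by Proposition \ref{pro:topology}), and set $x_0:=\pi(o)$, $x_n:=\pi_n(o_n)$, so that $x_n\to x_0$.

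I would build the approximating maps by ``reading off the same base point''. Choose diffeomorphisms $\phi_n$ of $\mb{D}^p$ with $\phi_n(x_0)=x_n$ and $\phi_n\to\mathrm{id}$ in $\mathcal{C}^\infty_{\mathrm{loc}}$ (for instance the spherical rotations carrying $x_0$ to $x_n$), and set $f_n:=\pi_n^{-1}\circ\phi_n\circ\pi:M\to M_n$. Then $f_n(o)=o_n$ by construction, giving the first condition of Definition \ref{dfn:pointed topology}. For the fourth condition, in $\mb{R}^d$-coordinates $f_n$ is $m\mapsto\Pi(\phi_n(\pi m),u_n(\phi_n(\pi m)))$, which converges uniformly on the compact ball $B(o,R)$ to $\Pi(\pi m,u(\pi m))=\iota(m)$ since $\phi_n\to\mathrm{id}$ and $u_n\to u$ locally uniformly. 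For the third condition, $\sigma_M$ pulls back under $\pi^{-1}$ to a metric $h$ on $\mb{D}^p$ depending algebraically on the $1$-jet of $u$, and $\sigma_{M_n}$ to $h_n$; then $f_n^*\sigma_{M_n}=\pi^*(\phi_n^*h_n)$ and $\sigma_M=\pi^*h$, and $\phi_n^*h_n\to h$ in $\mathcal{C}^2_{\mathrm{loc}}$ because $h_n\to h$ smoothly (from $u_n\to u$) and $\phi_n\to\mathrm{id}$. As the $\mathcal{C}^0$-part of this convergence forces the singular values of $df_n$ to tend to $1$ uniformly on $B(o,R)$, the map $f_n$ is a $2$-bilipschitz embedding there for $n$ large (taking $\Omega$ a slightly larger ball so that the path-length comparison is valid).

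The only genuinely non-formal point is the second condition, namely that $f_n(\Omega)$ contains the \emph{metric} ball $B(o_n,R)$. For this I would use that, for a complete spacelike graph, the induced metric dominates a fixed multiple of the base spherical metric, so that $\pi_n:(M_n,\sigma_{M_n})\to(\mb{D}^p,d_{\mb{D}^p})$ is $C$-Lipschitz with $C$ \emph{independent of $n$} (the metric companion of the $1$-Lipschitz graph property of Proposition \ref{pro:topology}). Consequently $\pi_n(B(o_n,R))\subset B_{\mb{D}^p}(x_n,CR)$. Taking $\Omega:=\pi^{-1}(B_{\mb{D}^p}(x_0,CR+1))$, which contains $B(o,R)$ by the same Lipschitz bound for $\pi$, one has $f_n(\Omega)=\pi_n^{-1}(\phi_n(B_{\mb{D}^p}(x_0,CR+1)))$; and since $\phi_n\to\mathrm{id}$ and $x_n\to x_0$, the set $\phi_n(B_{\mb{D}^p}(x_0,CR+1))$ contains $B_{\mb{D}^p}(x_n,CR)$ for $n$ large, whence $B(o_n,R)\subset\pi_n^{-1}(B_{\mb{D}^p}(x_n,CR))\subset f_n(\Omega)$, as required. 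The main obstacle is thus establishing the uniform Lipschitz control of the projections $\pi_n$; everything else reduces to the smooth convergence $u_n\to u$ together with the smooth dependence of the induced metric and of the embedding on the $1$-jet of the graph.
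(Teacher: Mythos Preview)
Your approach coincides with the paper's: both work in a Poincar\'e model, write $M_n,M$ as graphs of $u_n,u:\mb{D}^p\to\mb{S}^q$, and take the approximating map to be the one matching $\mb{D}^p$--coordinates (the paper sets $f_n=\Pi\tau_n\pi\Pi^{-1}$, i.e.\ your $\pi_n^{-1}\circ\pi$; your insertion of $\phi_n$ to enforce $f_n(o)=o_n$ is a refinement the paper simply omits). Conditions (1), (3), (4) and the $2$-bilipschitz property are then handled identically, via the smooth convergence $u_n\to u$ and the smooth dependence of the pulled-back metric on the $1$-jet of the graph.

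Where you diverge is in treating condition (2). You reduce it to a uniform Lipschitz bound for the projections $\pi_n:(M_n,\sigma_{M_n})\to(\mb{D}^p,d_{\mb{D}^p})$, valid for \emph{all} complete spacelike $p$-submanifolds, and flag this as the main obstacle. That claim is not obvious: computing in the model one finds $h_n=\frac{4}{(1-|x|^2)^2}\bigl(|dx|^2-\tfrac{(1+|x|^2)^2}{4}|du_n(dx)|^2\bigr)$, so $h_n/g_{\mb{D}^p}$ can be small wherever $|du_n|$ approaches $1$, and extracting a uniform positive lower bound from completeness would require a separate argument. In any case it is stronger than needed. Since your $f_n$ is a \emph{global} diffeomorphism $M\to M_n$ and $f_n^*\sigma_{M_n}\to\sigma_M$ on compacta, for $n$ large $f_n$ is $2$-bilipschitz on the compact set $\overline{B_M(o,2R)}$; then any unit-speed $M_n$-geodesic from $o_n$ of length $<R$ has $f_n$-preimage that cannot exit $B_M(o,2R)$ (otherwise the portion up to the first exit time would have $M$-length $\ge 2R$, hence $M_n$-length $\ge R$), so $B_{M_n}(o_n,R)\subset f_n(B_M(o,2R))$ and one may take $\Omega=B_M(o,2R)$. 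This bypasses the Lipschitz question entirely and is the route implicit in the paper's terse proof.
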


\begin{proof}
Let us work in a Poincaré model $\Pi:\mb{D}^p\times\mb{S}^q\to\mb{H}^{p,q}$. Consider a sequence $M_n$ of complete spacelike $p$-submanifolds converging to a complete smooth spacelike $p$-submanifold $M$ in the smooth convergence topology. Let $u_n,u:\mb{D}^p\to\mb{S}^q$ be the smooth functions whose graphs are $M_n,M$. The graph maps $\tau_n,\tau:x\mapsto (x,u_n(x)),(x,u(x))$ provide global charts for $\Pi^{-1}(M_n),\Pi^{-1}(M)$ in which the metric tensors are $\sigma_{M_n}=\tau_n^*\Pi^*g_{\mb{H}^{p,q}},\sigma_M=\tau^*\Pi^*g_{\mb{H}^{p,q}}$. Note that $\sigma_{M_n}\to\sigma_M$ smoothly on compact subsets. As approximating maps, we just define $f_n:=\tau_n\pi\Pi^{-1}$ where $\pi:\mb{D}^p\times\mb{S}^q\to\mb{D}^p$ is the projection to the first factor. In the global charts for $M_n,M$ given by $\tau_n\Pi^{-1},\tau\Pi^{-1}$, the approximating map $f_n$ is just the identity. It follows that $f_n$ converges smoothly on compact subsets to the inclusion $\iota:M\to\mb{H}^{p,q}$ and that $f_n^*\sigma_{M_n}\to\sigma_M$ smoothly on compact subsets.   
\end{proof}

\subsection{Maximal submanifolds are robust}
We establish the following result.

\begin{thm}
\label{thm:maximal is robust}
The family $\mathcal{M}{\rm ax}$ of complete maximal $p$-submanifolds of $\mb{H}^{p,q}$ is a robust family for ${\rm SO}(p,q+1)$.  
\end{thm}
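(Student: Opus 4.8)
The plan is to verify the four defining conditions of Definition \ref{dfn:robust family} in turn, equipping each $M\in\mathcal{M}\mathrm{ax}$ with its induced Riemannian metric $\sigma_M$ (positive definite since $M$ is spacelike). Invariance \eqref{lab:invariance} is immediate: ${\rm SO}(p,q+1)$ preserves $\langle\bullet,\bullet\rangle$, hence acts on $\mb{H}^{p,q}$ by isometries of $g_{\mb{H}^{p,q}}$; such isometries send spacelike to spacelike, preserve completeness, and carry the second fundamental form to the second fundamental form, so they preserve maximality and satisfy $\sigma_{g(M)}=(g^{-1})^*\sigma_M$.

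For Compactness \eqref{lab:compactness}, given $(M_n,o_n)$ I would use transitivity of ${\rm SO}(p,q+1)$ on pointed spacelike frames: the group acts transitively on $\mb{H}^{p,q}$ and the stabilizer of a point acts transitively (Witt's theorem) on the maximal positive-definite $p$-planes of its tangent space. So I can choose $g_n\in G$ moving $(o_n,T_{o_n}M_n)$ to a fixed pair $(o_\star,P_\star)$. In the Poincaré model (Definition \ref{dfn:poincare model}) adapted to $(o_\star,P_\star)$, each $g_n(M_n)$ is the graph of a $1$-Lipschitz $u_n:\mb{D}^p\to\mb{S}^q$ with $u_n(0),du_n(0)$ fixed (Proposition \ref{pro:topology}). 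The substantial input---and the main obstacle---is that complete \emph{maximal} submanifolds are \emph{uniformly} spacelike and enjoy uniform curvature and higher-order estimates (Seppi--Smith--Toulisse \cite{SST}, cf. \cite{LTW}); elliptic regularity then makes $\{u_n\}$ precompact in $\mathcal{C}^\infty_{loc}$. A subsequence converges smoothly on compacts to a $1$-Lipschitz $u_\infty$ over all of $\mb{D}^p$ whose graph $M_\infty$ is again complete and maximal (maximality passes to $\mathcal{C}^2$-limits, and uniform spacelikeness of the limit yields completeness). Lemma \ref{lem:smooth implies pointed} upgrades this to the required pointed $\mathcal{C}^2$-convergence $(g_n(M_n),o_\star)\to(M_\infty,o_\star)$.

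Avoidance \eqref{lab:avoidance} I would prove by a self-contained linear-algebra argument. Write $\phi\in\partial G$ as $\phi=\lim\lambda_n^{-1}g_n$ with $g_n\in G$ and $\lambda_n$ the largest singular value tending to $\infty$. Since $g_n$ preserves the form, for $v,w\in\mb{R}^d$
\[
\langle\phi v,\phi w\rangle=\lim\lambda_n^{-2}\langle g_nv,g_nw\rangle=\lim\lambda_n^{-2}\langle v,w\rangle=0,
\]
so $\mathrm{Im}(\phi)$ is totally isotropic; as the singular values of ${\rm SO}(p,q+1)$ come in reciprocal pairs, $\phi^\flat:=\lim\lambda_n^{-1}g_n^{-1}$ also exists (up to a subsequence), is nonzero, has totally isotropic image, and is the form-adjoint of $\phi$. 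Hence $\ker\phi=(\mathrm{Im}\,\phi^\flat)^\perp$ and, by nondegeneracy of $\langle\bullet,\bullet\rangle$, the radical of $\ker\phi$ is exactly $W:=\mathrm{Im}\,\phi^\flat$, a nonzero totally isotropic subspace. If some $M$ were contained in $\ker\phi$, pick $x\in M$ and $w\in W\setminus\{0\}$: then $w$ is orthogonal to $x$ and to $T_xM$, hence to $N_x:=\mb{R}x\oplus T_xM$. But $N_x$ is nondegenerate of signature $(p,1)$ (timelike $x$, spacelike $T_xM$), so $N_x^\perp$ is negative definite of signature $(0,q)$; a nonzero isotropic $w\in N_x^\perp$ is impossible. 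Thus $M\not\subset\ker\phi$.

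Finally, for Domination \eqref{lab:domination} I would take the log-linear function
\[
d_{(M,o)}(x):=C\bigl(1+\mathrm{arccosh}^+(-\langle o,x\rangle)\bigr),\qquad \mathrm{arccosh}^+(t):=\mathrm{arccosh}(\max(t,1)),
\]
which depends continuously on $o$ and, since $G$ preserves $\langle\bullet,\bullet\rangle$, automatically satisfies the equivariance \eqref{lab:domination1}. Property \eqref{lab:domination2} amounts to the uniform comparison $d_M(o,x)\le C(1+\mathrm{arccosh}^+(-\langle o,x\rangle))$, whose nontrivial upper half again rests on the uniform spacelikeness and curvature bounds of \cite{SST} (the easy lower bound $-\langle o,x\rangle\le\cosh d_M(o,x)$, coming from intrinsic distance dominating the ambient one, shows the comparison is two-sided). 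For \eqref{lab:domination3}, given $\rho_n\to\rho$ with $\rho_n(\Gamma)$ preserving $M_n$, I would choose $o_n$ to be the point of $M_n$ lying over the center of a fixed Poincaré disk, namely $o_n=u_n(0)\in\mb{S}^q$; these lie in the compact sphere $\mb{S}^q$, and since each $\rho_n(\gamma)$ is bounded (being convergent), $-\langle o_n,\rho_n(\gamma)o_n\rangle$ is bounded for every $\gamma$, yielding the required uniform bound on $d_{(M_n,o_n)}(\rho_n(\gamma)o_n)$. The recurring difficulty throughout is thus the uniform geometric control of complete maximal submanifolds from \cite{SST}; everything else is elementary linear algebra together with the homogeneity of $\mb{H}^{p,q}$.
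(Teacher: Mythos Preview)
Your proposal is correct and follows essentially the same route as the paper's proof: invariance is immediate, compactness comes from the \cite{SST} cocompactness result plus Lemma~\ref{lem:smooth implies pointed}, avoidance is the linear-algebra observation that $\ker\phi$ is orthogonal to a nonzero totally isotropic subspace, and the dominating function is (up to constants) $\mathrm{arccosh}(|\langle o,x\rangle|)$ with basepoints $o_n$ taken over the origin of a fixed Poincar\'e disk.

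One correction worth making concerns your discussion of \eqref{lab:domination2}. The inequality $d_M(o,x)\le\mathrm{arccosh}(-\langle o,x\rangle)$ is in fact an elementary property of \emph{all} complete spacelike $p$-submanifolds of $\mb{H}^{p,q}$ (the paper cites \cite[Lemma 2.1]{MV}); no appeal to the uniform spacelikeness or curvature bounds of \cite{SST} is needed here. Conversely, your parenthetical ``easy lower bound'' $-\langle o,x\rangle\le\cosh d_M(o,x)$ is actually \emph{false} in general: the Riemannian intuition that intrinsic distance dominates ambient distance breaks down in the pseudo-Riemannian setting, and one can build complete spacelike curves in $\mb{H}^{1,1}$ where the intrinsic length is strictly smaller than $\mathrm{arccosh}(-\langle o,x\rangle)$. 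Fortunately this reverse inequality plays no role in the argument, so your proof stands; but you have the ``easy'' and ``hard'' directions swapped.
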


\begin{proof}
We need to check four properties. 

{\bf Property \eqref{lab:invariance}}. This follows immediately from the definitions.    

{\bf Property \eqref{lab:compactness}}. Consider the space of pointed complete maximal $p$-submanifolds 
\[
\mathcal{PM}{\rm ax}:=\{(M\subset\mb{H}^{p,q},o\in M)\}\subset\mathcal{M}{\rm ax}\times\mb{H}^{p,q}.
\]
By \cite[Theorem 5.3]{SST}, if we equip $\mathcal{M}{\rm ax}$ with the smooth convergence topology (Definition \ref{dfn:smooth conv}), then the action of ${\rm SO}(p,q+1)$ on $\mathcal{PM}{\rm ax}$ is cocompact. Thus, it is enough to show that if $(M_n,o_n)\to(M,o)$ in the topology of smooth convergence, then it converges also in the pointed $\mathcal{C}^2$-topology in the sense of Definition \ref{dfn:pointed topology}. This is the content of Lemma \ref{lem:smooth implies pointed}.

{\bf Property \eqref{lab:avoidance}}. Consider an unbounded sequence $g_n\in{\rm SO}(p,q+1)$ such that $a_ng_n\to\phi\neq 0$ for some $a_n\to 0$. Let us start by observing the following. For every $x\in M$, the subspace ${\rm Span}\{M\}$ contains $T_xM$ which is a subspace of signature $(p,0)$. Hence, to show that $M$ cannot be contained in ${\rm Ker}(\phi)$ it is enough to prove that ${\rm Ker}(\phi)$ cannot contain a positive definite subspace of dimension $p$. As $a_ng_n\to\phi$, we have $a_ng_n^{-1}\to\phi^t$. We now observe that ${\rm Ker}(\phi)$ is orthogonal to ${\rm Im}(\phi^t)$ since for every $v\in{\rm Ker}(\phi)$ and $u\in\mb{R}^{p,q}$ we have $0=\langle \phi(v),u\rangle=\langle v,\phi^{t}(u)\rangle$. Let us prove that ${\rm Im}(\phi^t)$ is totally isotropic. For every $u,v\in\mb{R}^{p,q}$ we have
\[
\langle\phi^t(u),\phi^t(v)\rangle=\lim{a_n^2\langle g_n^{-1}(u),g_n^{-1}(v)\rangle}=\lim{a_n^2\langle u,v\rangle}=0
\]
as desired. As the image of $\phi^t=\lim{a_ng_n^{-1}}$ is a non-trivial totally isotropic subspace, its orthogonal in $\mb{R}^{p,q}$ cannot contain a subspace of signature $(p,0)$.

{\bf Properties \eqref{lab:domination1}, \eqref{lab:domination2}, and \eqref{lab:domination3}}. For every $(M,o)\in\mathcal{M}$ we set
\[
d_{(M,o)}(x):={\rm arccosh}\left(\min\left\{|\langle o,x\rangle|, 1\right\}\right).
\]

This is the so-called {\em spacelike pseudo-distance} introduced first by Glorieux and Monclair \cite{GM}. Property \eqref{lab:domination1} follows immediately from the definition.

Property \eqref{lab:domination2}. This is a basic property of complete spacelike $p$-submanifolds of $\mb{H}^{p,q}$, see for example \cite[Lemma 2.1]{MV}.

Lastly, let us prove Property \eqref{lab:domination3}. Fix a Poincaré model $\mb{H}^{p,q}\simeq\mb{D}^p\times\mb{S}^q$. Consider a sequence $M_n$. By Proposition \ref{pro:topology}, in the Poincaré model $M_n$ is the graph of a smooth function $f_n:\mb{D}^p\to\mb{S}^q$ which is 1-Lipschitz with respect to the spherical metrics on the domain and target. We choose $o_n:=f_n(o)$ where $o\in\mb{D}^p$ is any fixed basepoint. Up to subsequences, by Arzelà-Ascoli, $f_n$ converges to a 1-Lipschitz function $f_\infty:\mb{D}^p\to\mb{S}^q$. By our choices, the functions $d_{(M_n,o_n)}(x):={\rm arccosh}\left(\min\left\{|\langle o_n,x\rangle|, 1\right\}\right)$ converge to $d_\infty(x)={\rm arccosh}\left(\min\left\{|\langle o_\infty,x\rangle|, 1\right\}\right)$ where $o_\infty=f_\infty(o)$.

Together, the four properties prove the theorem.
\end{proof}

\subsection{The proof of Theorem \ref{thm:main2}}
We must prove openness and closedness.

{\bf Openness}. Our proof here does not differ from the one in \cite[Proposition 7.4]{BK} which rests on the Ehresmann-Thurston principle combined with the work of Seppi, Smith, and Toulisse \cite{SST}. We briefly sketch it. 

Observe that, by Corollary \ref{cor:cohomological dimension}, if $\Gamma$ is a torsion-free group of cohomological dimension $p$ that admits a discrete and faithful representation $\rho\in\T(\Gamma,{\rm SO}(p,q+1),\mathcal{M}{\rm ax})$, then $\Gamma$ is isomorphic to the fundamental group of an aspherical $p$-manifold. Openness comes from two facts. Let $Y$ be a closed aspherical $p$-manifold with universal cover $X\to Y$. 

By the Ehresmann-Thurston principle, if $\rho:\pi_1(Y)\to{\rm SO}(p,q+1)$ admits an equivariant spacelike embedding $X\to\mb{H}^{p,q}$ then any representation sufficiently near $\rho$ in ${\rm Hom}(\pi_1(Y),{\rm SO}(p,q+1))$ does the same.

By \cite{SST}, if $\rho:\pi_1(Y)\to{\rm SO}(p,q+1)$ admits an equivariant spacelike embedding $\iota:X\to\mb{H}^{p,q}$ then it admits one $\iota_\rho$ whose image $\iota_\rho(X)$ is a complete maximal $p$-submanifold. As the action $\rho(\pi_1(Y))\curvearrowright\iota_\rho(X)$ is by isometries and topologically conjugate to the deck group action $\pi_1(Y)\curvearrowright X$, the representation $\rho:\pi_1(Y)\to{\rm Isom}(\iota_\rho(X))$ is injective and has discrete image. By Corollary \ref{cor:discrete and faithful}, it follows that $\rho$ is discrete and faithful.

{\bf Closedness}. Let $\rho_n:\Gamma\to{\rm SO}(p,q+1)$ be a sequence of discrete and faithful representations each preserving a maximal $p$-submanifold $M_n\subset\mb{H}^{p,q}$ and converging to $\rho$ in ${\rm Hom}(\Gamma,{\rm SO}(p,q+1))$. We want to show that $\rho$ preserves a maximal $p$-submanifold. By Theorem \ref{thm:maximal is robust}, maximal $p$-submanifolds form a robust family for ${\rm SO}(p,q+1)$. Hence, by Theorem \ref{thm:main4precise} and Remark \ref{rmk:discrete and faithful}, there are $o_n\in M_n$, $g_n\in {\rm SO}(p,q+1)$, and a complete maximal pointed $p$-submanifold $(M,o)$ such that (1) $g_n(M_n,o_n)\to(M,o)$ in the pointed $\mathcal{C}^2$-topology on the space of pointed maximal $p$-submanifolds, (2) $\bar{\rho}_n:=g_n\rho_ng_n^{-1}$ converges to a discrete and faithful representation $\bar{\rho}$ preserving $M$. By Corollaries \ref{cor:discrete and faithful} and \ref{cor:cohomological dimension}, $\bar{\rho}(\Gamma)$ acts properly discontinuously, freely, and cocompcatly on $M$. 

We recall that, by the proof of Property \eqref{lab:domination} in Theorem \ref{thm:maximal is robust}, the points $o_n\in M_n$ are chosen as follows. Fix an auxiliary Poincaré model $\Pi:\mb{D}^p\times\mb{S}^q\to\mb{H}^{p,q}$ induced by an orthogonal splitting $\mb{R}^{p,q}=E\oplus F$ where $E$ has signature $(p,0)$ and $F=E^\perp$ (see Definition \ref{dfn:poincare model}). By Proposition \ref{pro:topology}, each $M_n$ is the graph of a 1-Lipschitz function $u_n:\mb{D}^p\to\mb{S}^q$ with respect to the spherical metrics on domain and target. Then $o_n$ can be taken to be $o_n:=(0,u_n(0))$ where $0\in\mb{D}^p$ is the origin. 

Let us note that, being 1-Lipschitz, $u_n$ extends continuously to the closed disk $u_n:\mb{D}^p\cup\partial\mb{D}^p\to\mb{S}^q$ and the extension is still 1-Lipschitz. By Arzelà-Ascoli, we have (up to passing to subsequences) $u_n\to u_\infty$ uniformly on $\mb{D}^p\cup\partial\mb{D}^p$ where $u_\infty:\mb{D}^p\cup\partial\mb{D}^p\to\mb{S}^q$ is again 1-Lipschitz. Denote by $M_\infty$ the graph of $u_\infty$ restricted to the open disk $\mb{D}^p$. As $M_n$ is $\rho_n(\Gamma)$-invariant and $M_n\to M_\infty$ in the Hausdorff topology, we deduce that $M_\infty$ is $\rho(\Gamma)$-invariant. We prove that the action $\rho(\Gamma)\curvearrowright M_\infty$ is free and properly discontinuous and, hence, also cocompact (as $\rho(\Gamma)$ has cohomological dimension $p$ and $M_\infty$ is contractible).

\begin{claim}
Consider $x_\infty\in M_\infty$. There is a constant $c>0$ and a finite generating set $S\subset\Gamma$ such that for every $\gamma\in\Gamma$ we have
\[
{\rm arccosh}(|\langle x_\infty,\rho(\gamma)x_\infty\rangle|)\ge c|\gamma|_S-1/c.
\]       
\end{claim}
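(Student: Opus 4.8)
The plan is to transfer a Milnor--Svarc (orbit-map) estimate from the \emph{renormalized} limit $(M,\bar\rho)$, where we already know the action is free, properly discontinuous and cocompact, back to the unrenormalized limit $(M_\infty,\rho)$, using only the ${\rm SO}(p,q+1)$-invariance and the continuity of the spacelike pseudo-distance
\[
\delta(x,y):={\rm arccosh}(|\langle x,y\rangle|),
\]
which is exactly the quantity appearing on the left-hand side of the claim. All points that occur below lie on the achronal graphs $M$ or $M_\infty$, so $|\langle x,y\rangle|\ge 1$ and $\delta$ is well defined and finite there. The decisive reason to route the argument through $M$, rather than through the $M_n$ directly, is that the quasi-isometry constant then comes from a \emph{single} cocompact action and is therefore uniform in $\gamma$.

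First I would record the input on the renormalized side. By Corollaries \ref{cor:discrete and faithful} and \ref{cor:cohomological dimension}, the action $\bar\rho(\Gamma)\curvearrowright(M,\sigma_M)$ is free, properly discontinuous and cocompact, so the Milnor--Svarc lemma applies: for a fixed finite generating set $S$ and any basepoint $\tilde o\in M$ there is $\bar c>0$ with
\[
d_M(\tilde o,\bar\rho(\gamma)\tilde o)\ge\bar c\,|\gamma|_S-1/\bar c\qquad\text{for all }\gamma\in\Gamma.
\]
Combining this with Property \eqref{lab:domination2}, which on the spacelike submanifold $M$ reads $d_M(\tilde o,x)\le d_{(M,\tilde o)}(x)=\delta(\tilde o,x)$, I obtain the lower bound $\delta(\tilde o,\bar\rho(\gamma)\tilde o)\ge\bar c\,|\gamma|_S-1/\bar c$.

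Next I would perform the transfer. Given $x_\infty\in M_\infty$, write $x_\infty=\Pi(a,u_\infty(a))$ and set $x_n:=\Pi(a,u_n(a))\in M_n$, so that $x_n\to x_\infty$ in $\mb{R}^d$. Since all $u_n$ are $1$-Lipschitz, the induced metric on $M_n$ is dominated by a fixed metric on $\mb{D}^p$, whence $d_{M_n}(o_n,x_n)$ is bounded independently of $n$ by the distance from $0$ to $a$. As $g_n$ acts isometrically on $M_n$ and $g_n(o_n)\to o\in M$, the points $g_n(x_n)$ stay in a bounded region of $g_n(M_n)$ around $o$, so by the pointed $\mathcal{C}^2$-convergence $g_n(M_n)\to M$ they subconverge to some $\tilde o\in M$. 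Now the ${\rm SO}(p,q+1)$-invariance of $\delta$ gives, for each fixed $\gamma$,
\[
\delta(x_n,\rho_n(\gamma)x_n)=\delta(g_n(x_n),\bar\rho_n(\gamma)g_n(x_n)).
\]
Letting $n\to\infty$ along the subsequence and using $\rho_n(\gamma)\to\rho(\gamma)$, $\bar\rho_n(\gamma)\to\bar\rho(\gamma)$, $x_n\to x_\infty$, $g_n(x_n)\to\tilde o$ together with the continuity of $\delta$ on the achronal locus, the left-hand side tends to $\delta(x_\infty,\rho(\gamma)x_\infty)$ and the right-hand side to $\delta(\tilde o,\bar\rho(\gamma)\tilde o)$. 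Hence $\delta(x_\infty,\rho(\gamma)x_\infty)=\delta(\tilde o,\bar\rho(\gamma)\tilde o)\ge\bar c\,|\gamma|_S-1/\bar c$, which is the claim after choosing $c\le\bar c$ small enough that $1/c$ absorbs the basepoint-dependent additive constant.

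The main obstacle I anticipate is the interchange of limits in the transfer step: one must guarantee that $g_n(x_n)$ genuinely converges to a point of $M$ (not merely stays bounded in $\mb{R}^d$) and that $\delta$ is continuous at the relevant limits, which relies on $M$ and $M_\infty$ being achronal so that the arguments of ${\rm arccosh}$ remain $\ge 1$; this is precisely where the $1$-Lipschitz control on the $u_n$ and the pointed $\mathcal{C}^2$-convergence are both essential. A secondary, routine point is that the additive constant in the Milnor--Svarc estimate can be taken uniform as $\tilde o$ ranges over the compact fundamental domain of $\bar\rho(\Gamma)\curvearrowright M$, since changing basepoint alters the estimate only by a bounded amount.
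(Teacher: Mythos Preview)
Your argument is correct and follows the same overall strategy as the paper: transfer a Milnor--\v{S}varc estimate from the cocompact action $\bar\rho(\Gamma)\curvearrowright M$ back to $M_\infty$ by tracking $x_n\in M_n$ with $x_n\to x_\infty$ and using that $g_n(x_n)$ subconverges to a point $\tilde o\in M$. The main difference is in the execution of the transfer. The paper routes through the intrinsic Riemannian distances: it bounds ${\rm arccosh}(|\langle x_n,\rho_n(\gamma)x_n\rangle|)\ge d_{M_n}(x_n,\rho_n(\gamma)x_n)$ via Property~\eqref{lab:domination2}, then rewrites $d_{M_n}$ as $d_{g_n(M_n)}$ and compares it to $d_M$ using the $2$-bilipschitz approximating maps $f_n$, before finally invoking Milnor--\v{S}varc on $d_M$. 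You instead exploit the ${\rm SO}(p,q+1)$-invariance of the pseudo-distance $\delta$ directly, obtaining the \emph{equality} $\delta(x_n,\rho_n(\gamma)x_n)=\delta(g_n(x_n),\bar\rho_n(\gamma)g_n(x_n))$ and passing to the limit on both sides; Property~\eqref{lab:domination2} is then applied only once, on $M$. This is a genuine simplification: it bypasses the bilipschitz comparison entirely and yields the slightly stronger conclusion $\delta(x_\infty,\rho(\gamma)x_\infty)=\delta(\tilde o,\bar\rho(\gamma)\tilde o)$. One minor remark: your bound on $d_{M_n}(o_n,x_n)$ via the $1$-Lipschitz property of $u_n$ is correct (the induced metric on a spacelike graph is dominated by the hyperbolic metric on $\mb{D}^p$), though the paper obtains the same bound more directly from Property~\eqref{lab:domination2} applied on $M_n$.
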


Note that freeness and proper discontinuity of $\rho(\Gamma)\curvearrowright M_\infty$ follow.

\begin{proof}
Let $x_n\in M_n$ be a sequence of points converging to $x$. Note that $d_{M_n}(x_n,o_n)\le d_{(M_n,o_n)}(x_n,o_n)={\rm arccosh}(|\langle x_n,o_n\rangle|)\to {\rm arccosh}(|\langle x_\infty,o_\infty\rangle|)=:R$. In particular, for all $n$ large, $d_{M_n}(x_n,o_n)\le R+1$. Hence for $n$ large we have $x_n=g_n^{-1}f_n(z_n)$ where $z_n\in B_{M_\infty}(o_\infty,R+1)$ and $f_n:B_{M_\infty}(o_\infty,R+1)\to g_n(M_n)$ is the 2-bilipschitz approximating map provided by the pointed $\mathcal{C}^2$-convergence. As $f_n$ is 2-bilipschitz
\begin{align*}
d_{M_n}(x_n,\rho_n(\gamma)x_n) &=d_{M_n}(g_n^{-1}f_n(z_n),\rho_n(\gamma)g_n^{-1}f_n(z_n))\\
 &=d_{M_n}(g_n^{-1}f_n(z_n),g_n^{-1}\bar{\rho}_n(\gamma)f_n(z_n))\\
 &=d_{g_n(M_n)}(f_n(z_n),\bar{\rho}_n(\gamma)f_n(z_n))\\
 &\ge d_M(f_n(z_n),f_n^{-1}\bar{\rho}_n(\gamma)f_n(z_n))\to d_M(z,\bar{\rho}(\gamma)z).    
\end{align*}

By Milnor-Švarc, for every finite generating set $S\subset\Gamma$ there is $\kappa>0$ such that for every $\gamma\in\Gamma$ and $z\in M$ we have $d_M(z,\bar{\rho}(\gamma)z)\ge\kappa|\gamma|_S-1/\kappa$ where $|\bullet|_S$ denotes the word-length with respect to $S$. 

By Property \eqref{lab:domination2},
\[
{\rm arccosh}(|\langle x_n,\rho_n(\gamma)x_n\rangle|)\ge d_{M_n}(x_n,\rho_n(\gamma)x_n).
\]
Passing to the limit, the claim follows.
\end{proof}

Next, we prove that $M_\infty$ cannot contain the image (which we call a {\em lightlike ray}) of a lightlike geodeic $\ell:[0,\infty)\to\mb{H}^{p,q}$ of the form $\ell(t)=x+tv$ where $x\in M_\infty$ and $v\in T_x\mb{H}^{p,q}=x^\perp$ is an isotropic (or lightlike) vector, that is $\langle v,v\rangle=0$. Note that $\langle\ell(t),\ell(t')\rangle=-1$ for every $t,t'\in[0,\infty)$.

\begin{claim}
\label{claim:no lightlike}
$M_\infty$ does not contain a lightlike ray $\ell\subset M_\infty$.    
\end{claim}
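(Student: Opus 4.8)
The plan is to argue by contradiction, combining the cocompactness of $\rho(\Gamma)\curvearrowright M_\infty$ established above with a two-point refinement of the growth estimate from the previous Claim. The first thing I would record is that every pair of points on a lightlike ray sits at spacelike pseudo-distance zero: if $\ell(t)=x+tv$ with $v\in x^\perp$ isotropic, then $\langle\ell(s),\ell(t)\rangle=\langle x,x\rangle+(s+t)\langle x,v\rangle+st\langle v,v\rangle=-1$ for all $s,t\ge0$, so that ${\rm arccosh}(|\langle\ell(s),\ell(t)\rangle|)=0$. Thus $\{\ell(t)\}_{t\ge0}$ is an infinite subset of $M_\infty$ of vanishing pseudo-diameter that nevertheless escapes every compact subset of $M_\infty$, since $|\ell(t)|\to\infty$ in $\mb{R}^{p+q+1}$ as $v\ne0$.

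Next I would use cocompactness: fix a compact $K\subset M_\infty$ with $\rho(\Gamma)K=M_\infty$ and write ray points as $\ell(t_j)=\rho(\gamma_j)k_j$ with $k_j\in K$ and $t_j\to\infty$. Since $\rho$ takes values in ${\rm SO}(p,q+1)$ and hence preserves $\langle\bullet,\bullet\rangle$, for any indices $i,j$ one has ${\rm arccosh}(|\langle k_i,\rho(\gamma_i^{-1}\gamma_j)k_j\rangle|)={\rm arccosh}(|\langle\ell(t_i),\ell(t_j)\rangle|)=0$. Because the orbit escapes while $k_j$ stays in $K$, the $\rho(\gamma_j)$ are eventually pairwise distinct, so after passing to a subsequence $|\gamma_j|_S\to\infty$ and, for fixed $i$, $|\gamma_i^{-1}\gamma_j|_S\ge|\gamma_j|_S-|\gamma_i|_S\to\infty$.

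The heart of the proof is a uniform two-point growth estimate: there exist $c'>0$ and $C'>0$ with ${\rm arccosh}(|\langle a,\rho(\eta)b\rangle|)\ge c'|\eta|_S-C'$ for all $a,b\in K$ and $\eta\in\Gamma$. I would prove this by the same approximation scheme as in the previous Claim, but replacing the triangle inequality (unavailable for the possibly degenerate limit) by the genuine triangle inequality of the Riemannian distance $d_{M_n}$ on the honestly spacelike approximants $M_n$. Choosing $a_n,b_n\in M_n$ with $a_n\to a$, $b_n\to b$, Property \eqref{lab:domination2} and the triangle inequality for $d_{M_n}$ give
\[
{\rm arccosh}(|\langle a_n,\rho_n(\eta)b_n\rangle|)\ge d_{M_n}(a_n,\rho_n(\eta)b_n)\ge d_{M_n}(a_n,\rho_n(\eta)a_n)-d_{M_n}(a_n,b_n).
\]
As in the previous Claim, the $2$-bilipschitz approximating maps together with Milnor–Švarc on the complete cocompact $M$ bound the first term below by $\tfrac{1}{2}(\kappa|\eta|_S-1/\kappa)$, uniformly for $a\in K$; the subtracted term $d_{M_n}(a_n,b_n)$ is uniformly bounded by some $D$ because $a,b$ range in the compact $K$ and the induced metrics converge. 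Letting $n\to\infty$ for each fixed $\eta$ yields the estimate with $c'=\kappa/2$ and $C'=1/(2\kappa)+D$.

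Applying this with $a=k_i$, $b=k_j$, $\eta=\gamma_i^{-1}\gamma_j$ gives $0\ge c'|\gamma_i^{-1}\gamma_j|_S-C'\to\infty$ as $j\to\infty$, the desired contradiction. The main obstacle, and the reason a direct argument on $M_\infty$ fails, is precisely the degeneracy of the induced metric along a lightlike ray: neither $d_{M_\infty}$ nor the spacelike pseudo-distance need obey a triangle inequality there. Running the distance comparison on the genuinely spacelike $M_n$, where $d_{M_n}$ is a bona fide metric, and only passing to the limit at the very end, is what circumvents this.
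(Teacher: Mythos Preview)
Your argument is correct and follows the same strategy as the paper: both run the triangle inequality and Property~\eqref{lab:domination2} on the genuinely spacelike $M_n$ (where $d_{M_n}$ is a bona fide metric), transfer to $M$ via the $2$-bilipschitz approximating maps and Milnor--\v{S}varc, and reach a contradiction with the vanishing pseudo-distance along the ray. One cosmetic point: your bound on $d_{M_n}(a_n,b_n)$ should be justified via Property~\eqref{lab:domination2} (giving $d_{M_n}(a_n,b_n)\le{\rm arccosh}(|\langle a_n,b_n\rangle|)$, which is bounded since $a,b\in K$ compact), not via ``the induced metrics converge'' --- the $u_n\to u_\infty$ convergence is only uniform $C^0$ and the limit tensor on $M_\infty$ may be degenerate.
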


\begin{proof}[Proof of the claim]
Suppose by contradiction that $M_\infty$ contains a lightlike ray $\ell$. Let $x_j\in \ell$ be a sequence of elements escaping every compact set. Let $K\subset M_\infty$ be a compact fundamental domain for the action of $\rho(\Gamma)$ on $M_\infty$ containing $x_0$. Set $A:=\max\{{\rm arccosh}(|\langle x,x'\rangle|)\,|\,x,x'\in K\}$. There is a sequence $\alpha_j$ of pairwise distinct elements such that $x_j\in\rho(\alpha_j)K$. 

Let $x_j^n\in M_n$ be a sequence of points converging to $x_j$. We have 
\begin{align*}
{\rm arccosh}(|\langle x_j^n,x_0^n\rangle|) &\ge d_{M_n}(x_j^n,x_0^n) &\text{\rm Property \eqref{lab:domination2}}\\
 &\ge d_{M_n}(\rho(\alpha_j)x_0^n,x_0^n)-d_{M_n}(\rho(\alpha_j)x_0^n,x_j^n) &\text{\rm triangle inequality}\\
 &\ge d_{M_n}(\rho(\alpha_j)x_0^n,x_0^n)-{\rm arccosh}(|\langle\rho(\alpha_j)x_0^n,x_j^n\rangle|) &\text{\rm Property \eqref{lab:domination2}}\\
 &\ge d_{M_n}(\rho(\alpha_j)x_0^n,x_0^n)-A &\rho(\alpha_j)x_0^n,x_j^n\in\rho(\alpha_j)K.
\end{align*}

Note that the left-hand side converges to ${\rm arccosh}(|\langle x_j,x_0\rangle|)=0$. 

Consider then
\[
d_{M_n}(\rho(\alpha_j)x_0^n,o_n)\le d_{M_n}(\rho(\alpha_j)x_0^n,x_0^n)+d_{M_n}(x_0^n,o_n).
\]
By the above inequality, the first summand in the right-hand side is uniformly bounded. By Property \eqref{lab:domination2}, the second term satisfies $d_{M_n}(x_0^n,o_n)\le {\rm arccosh}(|\langle x_0^n,o_n\rangle|)$ and ${\rm arccosh}(|\langle x_0^n,o^n\rangle|)\to {\rm arccosh}(|\langle x_0,o_\infty\rangle|)$. Thus, also the second term is uniformly bounded. This means that $x_0^n$ is contained in a uniform ball $B(o_n,R)$ and hence, by the pointed $\mathcal{C}^2$-convergence, $g_n(M_n,o_n)\to (M,o)$ can be written as $x_0^n=g_n^{-1}f_n(w_0^n)$ where $f_n:B_{M_\infty}(o,R)\to g_n(M_n)$ is the approximating map provided by the pointed $\mathcal{C}^2$-convergence. We have:
\begin{align*}
d_{M_n}(\rho_n(\alpha_j)x_0^n,x_0^n) &=d_{M_n}(\rho_n(\alpha_j)g_n^{-1}f_n(w_0^n),g_n^{-1}f_n(w_0^n))\\
&=d_{g_n(M_n)}(\bar{\rho}_n(\alpha_j)f_n(w_0^n),f_n(w_0^n))\\
&\ge d_M(f_n^{-1}\bar{\rho}_n(\alpha_j)f_n(w_0^n),w_0^n)/2\to d_M(\bar{\rho}(\alpha_j)w_0,w_0)/2.
\end{align*}

By Milnor-Švarc, for every finite generating set $S\subset\Gamma$ there is $\kappa>0$ such that for every $\gamma\in\Gamma$ and $w\in M$ we have $d_M(w,\bar{\rho}(\gamma)w)\ge\kappa|\gamma|_S-1/\kappa$ where $|\bullet|_S$ denotes the word-length with respect to $S$. Thus $d_{M_n}(\rho(\alpha_j)x_0^n,x_0^n)\ge|\alpha_j|_S/\kappa-1/\kappa$ diverges, contradicting the fact that it should be uniformly bounded. This concludes the proof of the claim.
\end{proof}

Lastly, we argue that the elements $g_n\in{\rm SO}(p,q+1)$ realizing $g_n(M_n,o_n)\to(M,o)$ must be bounded. From there we immediately deduce that $\rho$ and $\bar{\rho}$ are conjugate $\bar{\rho}=g\rho g^{-1}$ by some element $g\in{\rm SO}(p,q+1)$ (a suitable limit of the $g_n$'s) and, hence, that $\rho(\Gamma)$ preserves the complete maximal $p$-submanifold $g^{-1}(M)$.

Suppose by contradiction that the sequence $g_n$ is unbounded. Then there is a sequence $a_n\to 0$ such that $a_ng_n^{-1}\to\psi$ with $\psi$ non-trivial with totally isotropic image (see the proof of Property \eqref{lab:avoidance} in Theorem \ref{thm:maximal is robust}). We prove that:

\begin{claim}
If $g_n$ is unbounded, then $M_\infty$ contains a lightlike ray $\ell\subset M_\infty$. 
\end{claim}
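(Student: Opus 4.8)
I plan to produce the lightlike ray anchored at the basepoint limit $o_\infty=u_\infty(0)\in M_\infty$ (recall that in the Poincaré model $\Pi(0,v)=v$), with direction dictated by $\psi$. Writing $f_n\colon\Omega\to g_n(M_n)$ for the approximating maps of the pointed $\mathcal{C}^2$-convergence $(g_n(M_n),g_n(o_n))\to(M,o)$ and recalling that $f_n(o)=g_n(o_n)$ and $f_n\to\iota$ (conditions \eqref{lab:c1}, \eqref{lab:c4}), I set $q_n(z):=g_n^{-1}f_n(z)\in M_n$ for $z$ in a fixed ball $B_M(o,R)$. Then $q_n(o)=o_n\to o_\infty$, while for every $z$ the points $q_n(z)$ escape to infinity in $\mb{R}^d$: since $a_ng_n^{-1}\to\psi$ and $f_n(z)\to z$ we get $a_nq_n(z)\to\psi(z)$. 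By Property \eqref{lab:avoidance} applied to $\psi\in\partial G$ (note $[g_n^{-1}]\to[\psi]$ with $g_n^{-1}\in{\rm SO}(p,q+1)$ unbounded), I may fix $y\in M$ with $v:=\psi(y)\neq 0$; the vector $v$ is isotropic because ${\rm Im}(\psi)$ is totally isotropic.

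The decisive point is that, as $g_n\in{\rm SO}(p,q+1)$ preserves $\langle\cdot,\cdot\rangle$, the pairings of the escaping points remain bounded and are computed on $M$ in the limit:
\[
\langle o_n,q_n(y)\rangle=\langle g_n^{-1}f_n(o),g_n^{-1}f_n(y)\rangle=\langle f_n(o),f_n(y)\rangle\to\langle o,y\rangle.
\]
Multiplying by $a_n\to 0$ and using $o_n\to o_\infty$ together with $a_nq_n(y)\to v$ yields $\langle o_\infty,v\rangle=0$, so that $v\in T_{o_\infty}\mb{H}^{p,q}$ and $\ell(t):=o_\infty+tv$ is a genuine lightlike geodesic of $\mb{H}^{p,q}$. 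This use of form-invariance is precisely what lets me bypass the fact that $g_n^{-1}$ amplifies the (otherwise uncontrolled) error $f_n-\iota$.

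It remains to place $\ell$ inside $M_\infty$. First I identify its ideal endpoint with a boundary value of $u_\infty$: writing $q_n(y)=\Pi(\mu_n,u_n(\mu_n))$, the escape $a_nq_n(y)\to v\neq 0$ forces $|\mu_n|\to 1$; passing to a subsequence $\mu_n\to\hat\mu\in\partial\mb{D}^p$, the uniform convergence $u_n\to u_\infty$ on $\overline{\mb{D}^p}$ and the explicit form of $\Pi$ give $[v]=[\hat\mu+u_\infty(\hat\mu)]$. The relation $\langle o_\infty,v\rangle=0$ then translates, via $o_\infty=u_\infty(0)$ and the orthogonal splitting $\mb{R}^{p,q}=E\oplus F$, into the sharp distance equality $d_{\mb{S}^q}(u_\infty(0),u_\infty(\hat\mu))=\pi/2=d_{\mb{D}^p}(0,\hat\mu)$. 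Restricting $u_\infty$ to the spherical geodesic $\mu(s)=\tan(s/2)\,\hat\mu$, $s\in[0,\pi/2]$, from $0$ to $\hat\mu$, the composition $s\mapsto u_\infty(\mu(s))$ is $1$-Lipschitz with endpoints at distance $\pi/2$, hence a unit-speed minimizing geodesic $\cos(s)\,u_\infty(0)+\sin(s)\,w_0$ of $\mb{S}^q$. A direct computation in the Poincaré model then collapses the graph over this geodesic to the straight ray $\Pi(\mu(s),u_\infty(\mu(s)))=o_\infty+\tan(s)(\hat\mu+w_0)$, which coincides with $\ell$. Since $\mu(s)\in\mb{D}^p$ for $s<\pi/2$, this shows $\ell(t)\in M_\infty$ for all $t\ge 0$, as desired.

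I expect the main obstacle to be this final transfer: turning the clean but \emph{asymptotic} identity $\langle o_\infty,v\rangle=0$ into an honest \emph{straight} lightlike segment inside the graph $M_\infty$. The subtlety is that $1$-Lipschitzness only gives slope $\le 1$ a priori; one must exploit the sharp endpoint distance $\pi/2$ to upgrade this to a slope-$1$ geodesic, and then verify through the explicit Poincaré formula that a slope-$1$ spherical geodesic graphs to an \emph{affine} ray rather than a merely null curve. Everything else is bookkeeping built on the form-invariance of $g_n$ and the uniform convergence $u_n\to u_\infty$ already established.
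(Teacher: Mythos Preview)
Your proposal is correct and follows essentially the same route as the paper's proof: both pick a point $y\in M\setminus{\rm Ker}(\psi)$, push it to $M_n$ via $g_n^{-1}f_n$, exploit the form-invariance of $g_n$ to show that the pairing with $o_n$ stays bounded while the point escapes, and then read off the sharp equality $d_{\mb{S}^q}(u_\infty(0),u_\infty(\hat\mu))=\pi/2=d_{\mb{D}^p}(0,\hat\mu)$ in the Poincar\'e model to force $u_\infty$ to be isometric on the radial segment. The only differences are cosmetic: you first isolate the identity $\langle o_\infty,v\rangle=0$ and then translate it into the model, whereas the paper works directly with the Poincar\'e coordinates throughout; and you carry out the explicit verification that the graph over the slope-$1$ spherical geodesic is the affine ray $o_\infty+\tan(s)(\hat\mu+w_0)$, while the paper outsources that last step to \cite[Lemma 1.8]{MV2}.
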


\begin{proof}[Proof of the claim]
Recall that we fixed a Poincaré model $\Pi:\mb{D}^p\times\mb{S}^q\to\mb{H}^{p,q}$ induced by an orthogonal splitting $\mb{R}^{p,q}=E\oplus F$ where $M_\infty$ is the graph of a the restriction to $\mb{D}^p$ of a function $u_\infty:\mb{D}^p\cup\partial\mb{D}^p\to\mb{S}^q$ which is 1-Lipschitz with respect to the spherical metrics on domain and target. Recall also that $u_\infty$ is the uniform limit of the functions $u_n:\mb{D}^p\cup\partial\mb{D}^p\to\mb{S}^q$ whose graphs are the spacelike $p$-submanifolds $M_n$. By Property \eqref{lab:avoidance}, there exists $x\in M$ such that $x\not\in{\rm Ker}(\psi)$. Let $f_n:(M,o)\to (g_n(M_n),g_n(o_n))$ be the 2-bilipschitz approximating maps provided by the pointed $\mathcal{C}^2$-convergence. Consider $g_n^{-1}f_n(x)\in M_n$ and write it as 
\[
g_n^{-1}f_n(x)=\Pi(p_n,u_n(p_n))=\frac{2}{1-|p_n|^2}p_n+\frac{1+|p_n|^2}{1-|p_n|^2}u_n(p_n).
\]
Since $a_ng_n^{-1}f_n(x)\to\psi(x)=e+f$ where $e\in E,f\in F$, we have
\[
a_n\frac{2}{1-|p_n|^2}p_n\to e\quad\text{\rm and }\quad a_n\frac{1+|p_n|^2}{1-|p_n|^2}u_n(p_n)\to f.
\]

Note that, as $a_n\to 0$ and the limit is $\psi(x)\neq 0$, necessarily $|p_n|\to 1$. Note also that $a_n(1+|p_n|^2)/(1-|p_n|^2)\to |f|$. Up to subsequences we have $p_n\to p\in\mb{D}^p$ with $|p|=1$ (in particular $d_{\mb{D}^p}(0,p)=\pi/2$) and by the uniform convergence $u_n\to u_\infty$ we also get $u_n(p_n)\to u_\infty(p)$. Now, on the one hand, we have
\[
\langle a_ng_n^{-1}f_n(x),o_n\rangle=\langle a_nf_n(x),g_no_n\rangle\to 0
\]
as $f_n(x)\to x,g_n(o_n)\to o,a_n\to 0$. On the other hand,
\begin{align*}
\langle a_ng_n^{-1}f_n(x),o_n\rangle &=\left\langle \frac{2a_n}{1-|p_n|^2}p_n+a_n\frac{1+|p_n|^2}{1-|p_n|^2}u_n(p_n),u_n(0)\right\rangle\\
&\to\frac{|f|}{|u_\infty(p)|}\langle u_\infty(p),u_\infty(0)\rangle=\cos\left(d_{\mb{S}^q}(u_\infty(p),u_\infty(0))\right)
\end{align*}
Thus, $d_{\mb{S}^q}(u_\infty(p),u_\infty(0))=\pi/2=d_{\mb{D}^p}(0,p)$. Since $u_\infty$ is 1-Lipschitz with respect to the spherical metrics, and satisfies $d_{\mb{S}^q}(u_\infty(p),u_\infty(0))=d_{\mb{D}^p}(p,0)$ we deduce that $u_\infty$ is isometric on the ray $[0,p]$. It follows that the graph of $[0,p)$ is a lightlike ray (see for example \cite[Lemma 1.8]{MV2}).
\end{proof}

As the above violates Claim \ref{claim:no lightlike}, we reached a contradiction as desired. 

\section{Affine $p$-spheres in $\mb{R}^{p+1}$}

In this section, we briefly describe some convex projective geometry and its relation with affine spheres. Then, we show that hyperbolic affine spheres form a robust family (Theorem \ref{thm:affine is robust}). Lastly, we prove Theorem \ref{thm:main3}.

\subsection{Affine geometry and affine spheres}

We first recall the notion of affine normal and affine metric on a smooth hypersurface $M\subset\mb{R}^{p+1}$. Consider a vector field $\xi$ on $\mb{R}^{p+1}$ transverse to $M$, that is, for every $x\in M$ we have $\mb{R}^{p+1}=T_xM\oplus{\rm Span}\{\xi(x)\}$. The splitting allows us to decompose the standard flat connection $D$ of $\mb{R}^{p+1}$ along the tangent and transverse directions
\[
\begin{array}{c}
(D_UV)_x=(\nabla^\xi_UV)_x+\sigma^\xi_x(U,V)\xi(x)\\
(D_U\xi)_x=-S(U)_x+\tau(U)\xi(x)
\end{array}
\]
where $U,V\in T_xM$ and $\sigma^\xi_x(\bullet,\bullet)$ is a symmetric bilinear form on $T_xM$. 

\begin{dfn}[Locally Uniformly Convex]
A smooth hypersurface $M\subset\mb{R}^{p+1}$ is {\em locally uniformly convex} if for some (and hence any) choice of transverse vector field $\xi$ we have that $\sigma^\xi_x$ is positive definite at every point $x\in M$. 
\end{dfn}

Affine spheres are particular convex hypersurfaces.

\begin{dfn}[(Hyperbolic) Affine Sphere]
If $M$ is a locally uniformly convex smooth hypersurface, then there is a unique choice of transverse vector field $\xi$, the so-called {\em affine normal}, such that ${\rm dvol}_M=\iota_\xi{\rm dvol}_{{\rm Eu}}$ and $\nabla^\xi\omega=0$.

We say that $M$ is an {\em affine $p$-sphere} if its affine normals all pass through some common point $O$, the so-called {\em center of $M$}. We will consider only {\em hyperbolic affine $p$-spheres}, that is, affine $p$-spheres where the center $O$ lies on the concave side of $M$.

The metric $\sigma(\bullet,\bullet)$ associated to the affine normal is the {\em affine metric} of $M$. We say that $M$ is {\em complete} if $(M,\sigma)$ is complete as a Riemannian manifold.
\end{dfn}

Every complete hyperbolic affine sphere $M\subset\mb{R}^{p+1}$ determines an open convex cone by taking ${\rm Cone}(O,M)$. Viceversa, Cheng and Yau \cite{CY1,CY2} show that for every open convex cone $\mathcal{C}$ centered at $O$ and containing no lines there is a unique complete hyperbolic affine sphere $M$ with the same center asymptotic to it, that is $\mathcal{C}={\rm Cone}(O,M)$.

Being contained in a convex cone with no lines, every complete hyperbolic affine sphere comes equipped with a Hilbert distance.   

\begin{dfn}[Hilbert Distance]
Let $M\subset\mb{R}^{p+1}$ be a complete hyperbolic affine sphere centered at the origin. For every $x,y\in M$, the intersection ${\rm Cone}(0,M)\cap{\rm Span}\{x,y\}$ is an open sector bounded by two rays spanned by the vectors $a,b$ (on the sides of $x,y$ respectively). The {\em Hilbert distance} between $x,y$ is defined as
\[
h_M(x,y):=\log\mb{B}(a,x,y,b)
\]
where $\mb{B}$ is the cross-ratio on the projective line $\mb{P}{\rm Span}\{x,y\}$.    
\end{dfn}

Benoist and Hulin provide the following relation between the affine and Hilbert metrics which we will exploit in the proof of Theorem \ref{thm:affine is robust}.

\begin{pro}[{see \cite[Proposition 3.4]{BH}}]
\label{pro:hilbert metric}
There exists a constant $c>1$ only depending on the dimension $p$ such that for every affine $p$-sphere we have
\[
d_M(\bullet,\bullet)\le c\cdot h_M(\bullet,\bullet)
\]
where $d_M(\bullet,\bullet),h_M(\bullet,\bullet)$ are respectively the affine Riemannian distance and the Hilbert distance. 
\end{pro}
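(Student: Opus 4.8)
The plan is to derive the integrated inequality from a pointwise comparison of the two infinitesimal norms. Both $d_M$ and $h_M$ are length metrics: $d_M$ is the Riemannian distance of the affine (Blaschke) metric $\sigma$, while $h_M$ is the distance of the Hilbert Finsler metric. Consequently, it suffices to produce a constant $c=c(p)$ with $\|v\|_\sigma\le c\,\|v\|_h$ for every $x\in M$ and every $v\in T_xM$. Integrating this pointwise bound along the Hilbert geodesic (a projective line segment) joining two given points $x,y$ then yields $d_M(x,y)\le c\cdot h_M(x,y)$, since that segment is one admissible path for the affine length.

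The core tool is invariance under $\mathrm{SL}(p+1,\mb{R})$. The entire affine-differential package (affine normal, affine metric, affine volume) is preserved by the unimodular linear action on $\mb{R}^{p+1}$, and the Hilbert metric is projectively invariant; hence the ratio $\|v\|_\sigma/\|v\|_h$ is an invariant function of the pointed cone $(\mathrm{Cone}(0,M),x)$ together with a direction $v$. I would exploit this freedom to normalize the configuration: move $x$ to a fixed standard point and, applying John's theorem to a transverse cross-section of the cone (and adjusting by a scalar to stay in $\mathrm{SL}$), arrange that the cross-section is trapped between two concentric round balls whose radii differ by a factor depending only on $p$. Equivalently, $\mathcal{C}=\mathrm{Cone}(0,M)$ is sandwiched between two round (quadratic) cones $\mathcal{C}_1\subset\mathcal{C}\subset\mathcal{C}_2$ of comparable aperture.

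For a round cone the hyperbolic affine sphere is an explicit hyperboloid, and a direct computation shows that its affine metric is a \emph{universal} constant multiple of the underlying hyperbolic metric, which coincides with its Hilbert metric. The comparison for $\mathcal{C}$ then follows from two monotonicity statements under domain inclusion. The Hilbert metric is monotone decreasing, giving $\|v\|_{h,\mathcal{C}}\ge\|v\|_{h,\mathcal{C}_2}$. The Blaschke metric should be monotone decreasing as well, via the comparison principle for the Monge--Ampère equation governing the Cheng--Yau solutions \cite{CY1,CY2}, giving $\|v\|_{\sigma,\mathcal{C}}\le\|v\|_{\sigma,\mathcal{C}_1}$. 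Chaining $\|v\|_{\sigma,\mathcal{C}}\le\|v\|_{\sigma,\mathcal{C}_1}=a\,\|v\|_{h,\mathcal{C}_1}\le a\,K(p)\,\|v\|_{h,\mathcal{C}_2}\le a\,K(p)\,\|v\|_{h,\mathcal{C}}$, where $a$ is the universal model constant and $K(p)$ bounds the ratio of the two round-cone Hilbert norms, produces the desired dimensional constant $c=a\,K(p)$.

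The step I expect to be the main obstacle is the monotonicity and uniform control of the affine metric. Unlike the Hilbert metric, whose monotonicity is immediate from the cross-ratio definition, the Blaschke metric is the Hessian metric of the convex solution of a Monge--Ampère equation, so comparing it across nested cones requires the maximum/comparison principle for this equation together with boundary estimates for the Cheng--Yau solutions. Ensuring that the resulting constant depends only on $p$, and not on the individual cone, is exactly what the John-position normalization is designed to guarantee; verifying that this normalization interacts correctly with the Monge--Ampère comparison is the delicate technical point.
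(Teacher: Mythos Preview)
The paper does not prove this proposition; it quotes it from Benoist--Hulin \cite{BH} and uses it as a black box in the verification of Property~\eqref{lab:domination2} in Theorem~\ref{thm:affine is robust}. So there is no in-paper argument to compare against, only the cited source.

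Your reduction to a pointwise inequality $\|v\|_\sigma\le c\,\|v\|_h$ is correct (both are length metrics), and the use of $\mathrm{SL}(p+1,\mb{R})$-invariance together with a John-type normalization of a cross-section is exactly how Benoist--Hulin begin. The gap is the step you yourself single out: the monotonicity of the Blaschke metric under cone inclusion. The comparison principle for the Cheng--Yau Monge--Amp\`ere equation orders the \emph{potentials} on nested domains, but it does not in general order their Hessians, and there is no reason to expect $\|v\|_{\sigma,\mathcal{C}}\le\|v\|_{\sigma,\mathcal{C}_1}$ pointwise when $\mathcal{C}_1\subset\mathcal{C}$. Without that inequality your chain collapses, and the sandwich between two round cones no longer yields a bound on $\|v\|_{\sigma,\mathcal{C}}$.

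The argument in \cite{BH} avoids monotonicity altogether. After normalization one invokes Benz\'ecri's compactness theorem: the space of pointed properly convex domains modulo projective equivalence is compact, and the Cheng--Yau affine sphere together with its Blaschke metric at the marked point depends continuously on the domain (this is \cite[Corollary~3.3]{BH}, the same input the present paper uses for Property~\eqref{lab:compactness} in Theorem~\ref{thm:affine is robust}). The ratio $\|v\|_\sigma/\|v\|_h$ is then a continuous positive function on a compact set, hence bounded above and below by constants depending only on $p$. If you want to repair your outline, replace the monotonicity step by this compactness-plus-continuity argument; the John normalization is then what guarantees that the limit domain is still properly convex, so that the continuity applies.
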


\subsection{Hyperbolic affine spheres are robust}

We prove the following.

\begin{thm}
\label{thm:affine is robust}
The family $\mathcal{A}{\rm ff}$ of complete affine $p$-spheres of $\mb{R}^{p+1}$ centered at the origin is a robust family for ${\rm SL}(p+1,\mb{R})$.  
\end{thm}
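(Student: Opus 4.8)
The plan is to verify the four defining properties of a robust family for $\mathcal{A}{\rm ff}$, exactly paralleling the structure of the proof of Theorem \ref{thm:maximal is robust}. Property \eqref{lab:invariance} is immediate: the affine normal, affine metric, and center are all equivariant under the affine group, and in particular under ${\rm SL}(p+1,\mb{R})$, which preserves the Euclidean volume form ${\rm dvol}_{{\rm Eu}}$ used to normalize the affine normal; thus $g$ sends complete hyperbolic affine $p$-spheres centered at the origin to complete hyperbolic affine $p$-spheres centered at the origin (since $g$ fixes $0$), with the metric transforming as required.

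For Property \eqref{lab:compactness} I would use the Cheng--Yau correspondence \cite{CY1,CY2} between complete hyperbolic affine $p$-spheres centered at $0$ and open convex cones containing no lines. This gives a bijection between $\mathcal{A}{\rm ff}$ and the space of such cones, under which the ${\rm SL}(p+1,\mb{R})$-action corresponds to the action on cones. The action on pointed cones (equivalently on $\mathcal{PA}{\rm ff}$) should be cocompact: any properly convex cone can be normalized by ${\rm SL}(p+1,\mb{R})$ so that a chosen basepoint and the cone lie in a fixed compact family, and smooth convergence of cones (in the appropriate sense) yields smooth convergence of the corresponding affine spheres by the interior regularity and stability estimates in Cheng--Yau's solution of the Monge--Amp\`ere equation. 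As in the maximal case, I would then invoke a lemma showing that this smooth convergence implies pointed $\mathcal{C}^2$-convergence in the sense of Definition \ref{dfn:pointed topology}, the analogue of Lemma \ref{lem:smooth implies pointed}.

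For Property \eqref{lab:avoidance}, I would argue that a complete hyperbolic affine $p$-sphere $M$ spans $\mb{R}^{p+1}$ (it is not contained in any hyperplane, since it is a locally uniformly convex hypersurface whose tangent spaces together with the affine normal fill $\mb{R}^{p+1}$), so it suffices to show no kernel ${\rm Ker}(\phi)$ of a boundary point $\phi\in\partial{\rm SL}(p+1,\mb{R})$ contains all of $M$; but any proper subspace already fails to contain $M$. More precisely, taking an unbounded $g_n$ with $a_ng_n\to\phi\neq 0$, the kernel of $\phi$ is a proper linear subspace, and since $M$ spans $\mb{R}^{p+1}$ we have $M\not\subset{\rm Ker}(\phi)$. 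For the domination Property \eqref{lab:domination}, I expect the natural choice to be (a logarithm of) a linear functional, consistent with the paper's comment that dominating functions are essentially logarithms of linear functions; concretely, I would set $d_{(M,o)}(x)$ using the Hilbert distance $h_M$ or a linear gauge adapted to the cone, using Proposition \ref{pro:hilbert metric} to dominate the affine distance $d_M$ from above and thereby secure \eqref{lab:domination2}. Equivariance \eqref{lab:domination1} follows from the equivariance of the cross-ratio, and \eqref{lab:domination3} from choosing basepoints via the graph/radial parametrization of the spheres over a fixed domain and applying an Arzel\`a--Ascoli compactness argument to the normalized defining functions, exactly as in the maximal case.

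The main obstacle I anticipate is Property \eqref{lab:compactness}: establishing that smooth convergence of the cones (and hence of the affine spheres via the Monge--Amp\`ere solutions) implies the pointed $\mathcal{C}^2$-convergence of Definition \ref{dfn:pointed topology}, keeping track of the embedding into $\mb{R}^{p+1}$ and not merely the intrinsic Riemannian structure. This requires the uniform interior $\mathcal{C}^2$ (indeed $\mathcal{C}^\infty$) estimates for the hyperbolic affine sphere equation on compact subsets, available from Cheng--Yau's regularity theory, together with a cocompactness statement for the ${\rm SL}(p+1,\mb{R})$-action on pointed properly convex cones. Matching the functional $d_\bullet$ so that \eqref{lab:domination2} and \eqref{lab:domination3} hold simultaneously is the other delicate point, but Proposition \ref{pro:hilbert metric} is designed precisely to bridge the Hilbert and affine metrics and should make the linear-functional choice work.
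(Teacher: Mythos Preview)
Your outline is correct and matches the paper's approach in all four properties. For the two points you flagged as delicate, the paper makes the following specific choices: the dominating function is $d_{(M,o)}(x):=\log|\phi_o(x)|$ where $\phi_o$ is the linear functional vanishing on $T_oM$ with $\phi_o(o)=1$, and Property \eqref{lab:domination2} is proved by parametrizing the curve $M\cap{\rm Span}\{o,x\}$ as a unit-speed Hilbert geodesic $\gamma(t)=e^{t+\alpha(t)}u^++e^{-t+\alpha(t)}u^-$, computing $\phi_o(\gamma(t))$ explicitly, and using the bound $|\dot\alpha|\le 1-1/c$ (from \cite{Tho}, with $c$ the constant of Proposition \ref{pro:hilbert metric}) to get $\log|\phi_o(\gamma(t))|\ge (1-a)t+{\rm const}$; this is the step that actually links the linear functional to the Hilbert distance, and Proposition \ref{pro:hilbert metric} alone does not give it. For Property \eqref{lab:domination3}, the basepoint $o_n\in M_n$ is taken to be the point of minimal Euclidean norm, so that $T_{o_n}M_n=o_n^\perp$ and hence $\phi_{o_n}(\rho_n(\gamma)o_n)=\langle o_n/|o_n|,\rho_n(\gamma)(o_n/|o_n|)\rangle_{\rm Eu}$, which is controlled by compactness of $\mb{S}^p$; your Arzel\`a--Ascoli suggestion is in the right spirit but this direct choice avoids any graph parametrization. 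For Property \eqref{lab:compactness} the paper does not redo Cheng--Yau estimates but cites Benz\'ecri \cite{B} for cocompactness of the action on pointed properly convex domains and Benoist--Hulin \cite[Proposition 2.8, Corollary 3.3]{BH} for the passage to pointed $\mathcal{C}^2$-convergence.
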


\begin{proof}
We need to check four properties. 

{\bf Property \eqref{lab:invariance}}. This follows directly from the definitions.

{\bf Property \eqref{lab:compactness}}. Consider the set of pointed complete hyperbolic affine $p$-spheres
\[
\mathcal{PA}{\rm ff}:=\{(M\subset\mb{R}^{p+1},o\in M)\}\subset\mathcal{A}{\rm ff}\times\mb{R}^{p+1}.
\]
It admits a natural embedding in the product space ${\rm Cl}(\mb{S}^p)\times\mb{S}^p$, where ${\rm Cl}(\mb{S}^p)$ is the space of closed subsets of $\mb{S}^p$, given by 
\[
(M,o)\in\mathcal{PA}{\rm ff}\to\left(\overline{\pi(M)},\pi(o)\right)\in{\rm Cl}(\mb{S}^p)\times\mb{S}^p.
\]
Here $\pi:\mb{R}^{p+1}-\{0\}\to\mb{S}^p$ the natural radial projection and $\overline{\pi(M)}$ is the closure of $\pi(M)$ in $\mb{S}^p$. We endow ${\rm Cl}(\mb{S}^p)$ with the Hausdorff topology and ${\rm Cl}(\mb{S}^p)\times\mb{S}^p$ with the product topology. By \cite{B}, if we equip $\mathcal{A}{\rm ff}$ with the topology induced by the embedding above, then the action of ${\rm SL}(p+1,\mb{R})$ on $\mathcal{PM}$ is cocompact. Thus, it is enough to show that if $(\overline{\pi(M_n)},\pi(o_n))\to(\overline{\pi(M)},\pi(o))$ in ${\rm Cl}(\mb{S}^p)\times\mb{S}^p$, then $(M_n,o_n)\to(M,o)$ also in the pointed $\mathcal{C}^2$-topology in the sense of Definition \ref{dfn:pointed topology}. This is proved in \cite[Proposition 2.8 and Corollary 3.3]{BH}.

{\bf Property \eqref{lab:avoidance}}. Consider an unbounded sequence $g_n\in{\rm SL}(p+1,\mb{R})$ such that $a_ng_n\to\phi\neq 0$ for some $a_n\to 0$. Note that we always have ${\rm Span}\{M\}=\mb{R}^{p+1}$. In fact, for every $x\in M$, the subspace ${\rm Span}\{M\}$ contains the dimension $p$ subspace $T_xM$ and the vector $x$ which is transverse to it. This implies that $M\subsetneq{\rm Ker}(\phi)$. 

{\bf Properties \eqref{lab:domination1}, \eqref{lab:domination2}, and \eqref{lab:domination3}}. For every $(M,o)\in\mathcal{PA}{\rm ff}$ we set 
\[
d_{(M,o)}(x):=\log|\phi_o(x)|
\]
where $\phi_o:\mb{R}^{p+1}\to\mb{R}$ is the linear functional that vanishes on $T_xM$ and takes value 1 on $o$. Property \eqref{lab:domination1} is immediate from the definition.

Next, we discuss Property \eqref{lab:domination2}. Consider $(M,o)\in\mathcal{PM}$ and $x\in M$. The intersection $M\cap{\rm Span}\{o,x\}$ is a curve which we can be parameterized as
\[
\gamma(t)=e^{t+\alpha(t)}u^++e^{-t+\alpha(t)}u^-
\]
where $u^+,u^-$ are the two vectors of ${\rm Span}\{o,x\}$ determining the asymptotic directions of $M\cap{\rm Span}\{o,x\}$ and $\alpha:\mb{R}\to\mb{R}$ is a smooth function. This is a unit speed geodesic parametrization of $M\cap{\rm Span}\{o,x\}$ with respect to the natural Hilbert distance on $M$. Recall that by Proposition \ref{pro:hilbert metric}, there is a constant $c>1$ only depending on the dimension $d$ such that $d_M(\bullet,\bullet)\le c\cdot h_M(\bullet,\bullet)$.

Set $\alpha_0:=\alpha(0),\dot{\alpha}_0:=\dot{\alpha}(0)$. We can assume that $o=\gamma(0)=(e^{\alpha_0},e^{\alpha_0})$ and $x=\gamma(t_x)$ with $t_x>0$. The velocity at $o$ is $\dot{\gamma}(0)=((1+\dot{\alpha}_0)e^{\dot{\alpha}_0},(-1+\alpha_0)e^{\alpha_0})$. A simple linear algebra computation shows that
\[
\phi_o(\gamma(t))=e^{\alpha(t)-\alpha_0}(\cosh(t)+\dot{\alpha}_0\sinh(t)).
\]
By \cite[Proposition 2.2]{Tho}, we have $|\dot{\alpha}|\le a:=1-1/c$ where $c>1$ is the constant such that $d_M(\bullet,\bullet)\le c\cdot h_M(\bullet,\bullet)$. Thus 
\begin{align*}
\log|\phi_o(\gamma(t))| &=(\alpha(t)-\alpha_0)+\log(\cosh(t)+\dot{\alpha}_0\sinh(t)) &\\
 &\ge(\alpha(t)-\alpha_0)+\log(\cosh(t)-a\sinh(t)) &\text{\rm as $|\dot{\alpha}|<a$}\\
 &\ge(\alpha(t)-\alpha_0)+\log(e^t(1-a)/2) &\\
 &\ge(1-a)t+\log((1-a)/2) &\text{\rm as $|\alpha(t)-\alpha_0|\le||\dot{\alpha}||t<at$.}
\end{align*}

In conclusion
\[
d_M(o,x)\le c\cdot h_M(o,x)=ct_x\le\frac{c}{1-a}\left(\log|\phi_o(\gamma(t_x))|-\log\frac{1-a}{2}\right)
\]
and Property \eqref{lab:domination2} follows.

As for Property \eqref{lab:domination3}, consider a sequence $M_n$ of affine $p$-spheres centered at the origin. Define $o_n\in M_n$ to be the point of minimal Euclidean norm. Note that this is well defined as $M_n$ does not contain 0 and is a proper convex hypersurface of $\mb{R}^{p+1}$. Note also that the tangent space of $M_n$ at $o_n$ is the Euclidean orthogonal $T_{o_n}M_n=o_n^\perp$. It follows that $\phi_{o_n}(x)=\langle o_n/|o_n|^2_{{\rm Eu}},x\rangle_{{\rm Eu}}$. The points $o_n/|o_n|_{{\rm Eu}}$ all lie on the sphere $\mb{S}^p$ and, hence, they converge to some $o_\infty\in\mb{S}^p$ up to passing to subsequences. With the same choice, for every $\gamma\in\Gamma$ we have that $\phi_{o_n}(\rho_n(\gamma)o_n)=\langle o_n/|o_n|^2_{{\rm Eu}},\rho_n(\gamma)o_n\rangle_{{\rm Eu}}\to\langle o_\infty,\rho(\gamma)o_\infty\rangle_{{\rm Eu}}$. 

Together, the four properties prove the theorem.
\end{proof}

\subsection{The proof of Theorem \ref{thm:main3}}
We must prove openness and closedness.

{\bf Openness}. Again, as in the proof of Theorem \ref{thm:main2}, openness is very general and rests on the Ehresmann-Thurston principle this time combined with the work of Cheng and Yau \cite{CY1,CY2}. We briefly sketch it.

Observe that, by Corollary \ref{cor:cohomological dimension}, if $\Gamma$ is a torsion-free group of cohomological dimension $p$ that admits a discrete and faithful representation $\rho\in\T(\Gamma,{\rm SL}(p+1,\mb{R}),\mathcal{A}{\rm ff})$, then $\Gamma$ is isomorphic to the fundamental group of an aspherical $p$-manifold. Openness comes from two facts. Let $Y$ be a closed aspherical $p$-manifold with universal cover $X\to Y$.

By the Ehresmann-Thurston principle, if $\rho:\pi_1(Y)\to{\rm SL}(p+1,\mb{R})$ admits an equivariant strictly convex embedding $\iota_\rho:X\to\mb{R}^{p+1}$ then any representation sufficiently near $\rho$ in ${\rm Hom}(\pi_1(Y),{\rm SL}(p+1))$ does the same.

By \cite{CY1,CY2}, if $\rho:\pi_1(Y)\to{\rm SO}(p+1,\mb{R})$ admits an equivariant strictly convex embedding $\iota:X\to\mb{R}^{p+1}$ then it admits one $\iota_\rho$ whose image $\iota_\rho(X)$ is a complete hyperbolic affine $p$-sphere. As the action $\rho(\pi_1(Y))\curvearrowright\iota_\rho(X)$ is by isometries and topologically conjugate to the deck group action $\pi_1(Y)\curvearrowright X$, the representation $\rho:\pi_1(Y)\to{\rm Isom}(\iota_\rho(X))$ is injective and has discrete image. By Corollary \ref{cor:discrete and faithful}, it follows that $\rho$ is discrete and faithful.

{\bf Closedness}. Let $\rho_n:\Gamma\to{\rm SL}(p+1,\mb{R})$ be a sequence of discrete and faithful representations each preserving a hyperbolic affine sphere $M_n\subset\mb{R}^{p+1}$ and converging to $\rho$ in ${\rm Hom}(\Gamma,{\rm SL}(p+1,\mb{R}))$. We want to show that $\rho$ preserves a hyperbolic affine sphere. By Theorem \ref{thm:affine is robust}, affine spheres form a robust family for ${\rm SL}(p+1,\mb{R})$. Hence, by Theorem \ref{thm:main4precise} and Remark \ref{rmk:discrete and faithful}, there are $o_n\in M_n$, $g_n\in {\rm SL}(p+1,\mb{R})$, and a complete hyperbolic pointed affine sphere $(M,o)$ such that (1) $g_n(M_n,o_n)\to(M,o)$ in the pointed $\mathcal{C}^2$-topology on the space of pointed affine spheres, (2) $\bar{\rho}_n:=g_n\rho_ng_n^{-1}$ converges to a discrete and faithful representation $\bar{\rho}$ preserving $M$. By Corollaries \ref{cor:discrete and faithful} and \ref{cor:cohomological dimension}, $\bar{\rho}(\Gamma)$ acts properly discontinuously, freely, and cocompactly on $M$. 

We argue that $g_n$ must be bounded. Suppose by contradiction that this is not the case. Then there is a sequence $a_n\to 0$ such that $a_ng_n\to\phi$ with $\phi$ non-trivial. Note that $(a_ng_n)\rho_n(\gamma)=\bar{\rho}_n(\gamma)(a_ng_n)$ for every $\gamma\in\Gamma$. Passing to the limit, we obtain $\phi\rho(\gamma)=\bar{\rho}(\gamma)\phi$ for every $\gamma\in\Gamma$. Thus $\bar{\rho}(\Gamma)$ preserves ${\rm Im}(\phi)$. The contradiction comes from the fact that if $\bar{\rho}(\Gamma)$ acts cocompactly on a hyperbolic affine sphere and $\bar{\rho}(\Gamma)$ does not contain any infinite normal nilpotent subgroup then $\bar{\rho}(\Gamma)$ acts irreducibly on $\mb{R}^{p+1}$. This property follows immediately from \cite[Proposition 4.4 (d)]{Be2} (which is a mild variation of \cite[Lemma 3.7 (b)]{Be0}). In fact, \cite[Proposition 4.4 (d)]{Be2} implies that if a group $\bar{\rho}(\Gamma)$ acts cocompactly on a hyperbolic affine sphere and does not act irreducibly on $\mb{R}^{p+1}$, then it contains a finite index subgroup with a non-trivial center of the form $\mb{Z}^\ell$ for some $\ell\ge 1$.

\bibliographystyle{amsalpha.bst}
\bibliography{bibliography}

\Addresses

\end{document}